\theoremstyle{definition}
\let\oldsection\section
\renewcommand\section{\setcounter{equation}{0}\oldsection}
\renewcommand\thesection{\arabic{section}}
\newcommand{\be}{\begin{equation}\label}
\newcommand{\ee}{\end{equation}}
\newcommand{\beaa}{\begin{eqnarray}}
\newcommand{\bea}{\beaa\label}
\newcommand{\eea}{\end{eqnarray}}
\newcommand{\nn}{\nonumber}
\newcommand{\Om}{\Omega}
\newcommand{\pa}{\partial}
\newcommand{\ep}{\varepsilon}
\newcommand{\varphit}{\widetilde{\varphi}}
\newcommand{\Lp}{L^p(\Om)}
\newcommand{\Lq}{L^q(\Om)}
\newcommand{\Lin}{L^\infty(\Om)}
\newcommand{\norm}[2][ ]{\|#2\|_{#1}}
\DeclareMathOperator{\supp}{supp}
\newcommand{\R}{\mathbb{R}}
\newcommand{\io}{\int_\Om}
\newcommand{\fat}{\qquad \mbox{for all }t\in(0,T_{\max})}
\theoremstyle{plain}
\newtheorem{thm}{Theorem}[section]
\newtheorem{lem}[thm]{Lemma}
\newtheorem{cor}[thm]{Corollary}
\newtheorem{proposition}[thm]{Proposition}
\theoremstyle{definition}
\theoremstyle{remark}
\newtheorem{remark}[thm]{Remark}
\title{An interpolation inequality and its application in the Keller-Segel model}
\author{Xinru Cao \thanks{Institut f\"ur Mathematik, Universit\"at Paderborn, 
Warburger Str.100, 33098 Paderborn, Germany; \mbox{email: caoxinru@gmail.com}}
}
\begin{document}

\maketitle

\begin{abstract}
\noindent 
In this paper, we first prove an interpolation inequality of Ehrling-type, 
which is an improvement of a special case to the well known Gagliardo-Nirenberg inequality. Then we apply it to study the classical Keller-Segel system
\[
\left\{
\begin{array}{llc}
u_t=\Delta u-\nabla\cdot(u \nabla v), \\[6pt]
\displaystyle
v_t=\Delta v-v+u,
\end{array}
\right.
\]
in a bounded domain $\Omega\subset\R^N$ ($N\ge 2$) with smooth boundary. 
It is known that for any $\delta>0$, if 
$\io u^{\frac N2+\delta}(\cdot,t)$ is bounded, then the solution is 
global and bounded. 
Here we show that the same conclusion holds for a weaker assumption: the equi-integrability of 
$\{\io u^\frac N2(\cdot,t)|~t\in(0,T_{\max})\}$ can prevent blow up.
\\
{\bf Keywords:} Interpolation inequality, chemotaxis, global existence, boundedness\\
{\bf Math Subject Classification (2010):} 35K55, 35B40 35Q35, 92C17, 35B35
\end{abstract}

\section{Introduction}\label{introduction}
The following system called Keller-Segel model is proposed in 
\cite{keller1971model}
to model chemotatic migration 
\begin{equation}\label{eq:system}
\left\{
\begin{array}{rll}
&u_t=\Delta u-\nabla\cdot(u \nabla v), &(x,t)\in \Omega\times (0,T),\\[6pt]
\displaystyle
&v_t=\Delta v-v+u, &(x,t)\in\Omega\times (0,T),\\[6pt]
&\pa_\nu u=\pa_\nu v=0, &(x,t)\in \pa\Om\times(0,T),\\[6pt]
&u(x,0)=u_0(x),\quad  v(x,0)=v_0(x),
& x\in\Omega.
 \end{array}
\right.
\end{equation}
Here $\Om\subset \R^N$ ($N\ge 2$) is a bounded smooth domain, $T\in(0,\infty]$, and $\nu$ denotes the 
outer normal vector on $\pa\Om$.
Let $(u_0,v_0)$ be a nonnegative function pair,
$u$ and $v$ denote 
the density of cells and chemical 
concentration, respectively.
The system (\ref{eq:system}) describes an interesting interaction between the cells and
the chemical signal. 
This chemical substance is released by the cells themselves, and on the other hand,
it also attracts cells; meaning that the movement of 
cells is oriented to the higher density of chemical signal. 
The latter mechanism is known 
as chemotaxis, which is represented by 
the cross-diffusion
term $-\nabla\cdot(u\nabla v)$ in the first equation. 
This biological model plays an important role in numerous biological processes such as 
wound healing, cancer invasion. It also draws interests
from many mathematicians, for surveys in this area we refer to 
\cite{bellomo2015toward,horstmann20031970,hillen2009user} 
and the references therein.

A striking feature of this model is the occurrence of a blow up 
phenomenon caused by the
aggregation of cells, related research 
can be found in
\cite{herrero1997blow,horstmann2001blow,
nagai2000chemotactic,nagai2001blowup,
winkler2013finite,mizoguchi2014}.
The spatial dimension seems crucial in the mathematical analysis of detecting 
blow up.
In the one dimensional setting, blow up never happens. However, considering 
the two-dimensional case, 
one can prove the existence of radial blow up solutions 
if the initial data $(u_0,v_0)$ 
exceed the critical mass: $\io u_0>8\pi$ \cite{mizoguchi2014}; otherwise, 
the solution always remains bounded \cite{nagai1997application}.
In higher dimensions, whether a solution blows up does not depend on 
the total mass any more;  blow up solutions are constructed with 
any small mass \cite{winkler2013finite}.
On the other hand, looking for a sufficient condition which 
can prevent blow up may be 
of some {interest}, especially in two or higher dimensions. 

Throughout the paper, we consider the classical solution $(u,v)$ of \eqref{eq:system} 
on $\Om\times[0,T_{\max})$ emanating from the nonnegative initial pair 
$(u_0,v_0)\in C^0(\overline\Om)\times W^{1,\infty}(\Om)$, where $T_{\max}\in(0,\infty]$ 
denotes the maximal existence time of the solution. The local existence theory concerning this issue is presented in Lemma \ref{lem:locexist}.
Beyond this, a well known sufficient condition for global solutions is the following 
\cite[Lemma 3.2]{bellomo2015toward}:
\begin{proposition}
\label{lem:lp}
Let $N\ge 1$ and $p>\frac N2$. Assume that $\Om\subset \R^N$ is a bounded domain with 
smooth boundary and $(u,v)$ is a nonnegative classical solution of \eqref{eq:system} 
in $\Om\times(0,T_{\max})$ with maximal existence time $T_{\max}\in(0,\infty]$. If
\begin{align}\label{eq:lem1:lp}
\mathop{\sup}\limits_{t\in(0,T_{\max})} \norm[L^p(\Om)]{u(\cdot,t)}<\infty,
\end{align}
then 
\[
\mathop{\sup}\limits_{t\in(0,T_{\max})}\big(\norm[\Lin]{u(\cdot,t)}+
\norm[W^{1,\infty}(\Om)]{v(\cdot,t)}\big)<\infty.\]
\end{proposition}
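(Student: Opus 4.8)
The plan is to run a bootstrap (smoothing) argument based on the variation-of-constants representation for $u$, using the Neumann heat semigroup $(e^{t\Delta})_{t\ge 0}$ and its well-known $L^p$--$L^q$ estimates, together with parabolic regularity for the $v$-equation. Writing the $u$-equation in Duhamel form,
\[
u(\cdot,t)=e^{t\Delta}u_0-\intnt e^{(t-s)\Delta}\na\cd\big(u(\cdot,s)\na v(\cdot,s)\big)\,ds,
\]
one estimates $\norm[L^q(\Om)]{u(\cdot,t)}$ in terms of $\norm[L^r(\Om)]{u(\cdot,s)\na v(\cdot,s)}$, using the smoothing estimate $\norm[L^q(\Om)]{e^{\tau\Delta}\na\cd w}\le C\tau^{-1/2-\frac N2(1/r-1/q)}\norm[L^r(\Om)]{w}$. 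To control the product $u\na v$ one invokes elliptic/parabolic regularity for $v_t=\Delta v-v+u$: an $L^p$ bound on $u$ yields an $L^q$ bound on $\na v$ for a suitable larger $q$ (with the usual scaling $1/q>1/p-1/N$, or all $q<\infty$ when $p\ge N$, or $\na v\in L^\infty$ when $p>N$), uniformly in $t$ because of \eqref{eq:lem1:lp}.

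The key steps, in order, are: (1) From \eqref{eq:lem1:lp} with $p>\frac N2$ and the $v$-equation, deduce a uniform-in-time bound $\suptzeroT\norm[L^{q_1}(\Om)]{\na v(\cdot,t)}<\infty$ for some $q_1>N$ (if $p\ge N$ one may directly take $q_1$ as large as needed; if $\frac N2<p<N$ one gets $q_1$ strictly bigger than $p$ by Sobolev embedding applied to the bound $v\in L^\infty((0,T_{\max});W^{2,p})$ coming from maximal Sobolev regularity). (2) Feed this into the Duhamel formula: Hölder's inequality gives $\norm[L^r]{u\na v}\le\norm[L^{p}]{u}\,\norm[L^{q_1}]{\na v}$ with $1/r=1/p+1/q_1$, and then the semigroup estimate improves the integrability of $u$ from $L^p$ to $L^{q_2}$ with $q_2>p$, provided the time singularity $(t-s)^{-1/2-\frac N2(1/r-1/q_2)}$ is integrable, i.e. $\frac12+\frac N2(1/r-1/q_2)<1$; since $1/r=1/p+1/q_1$ and $1/p<2/N$, $q_1>N$, one checks this allows a genuine gain $q_2>p$. (3) Iterate: after finitely many steps one reaches $\suptzeroT\norm[L^{q}]{u(\cdot,t)}<\infty$ for some $q>N$, whence $\na v\in L^\infty$ uniformly in $t$; one more application of the Duhamel estimate with $\na v\in L^\infty$ and the standard $L^\infty$ smoothing $\norm[\Lin]{e^{\tau\Delta}\na\cd w}\le C\tau^{-1/2-\frac N2 \cdot \frac1q}\norm[L^q]{w}$ then yields $\suptzeroT\norm[\Lin]{u(\cdot,t)}<\infty$. (4) Finally, with $u\in L^\infty$ uniformly in $t$, parabolic Schauder/$L^p$ estimates for the $v$-equation give $\suptzeroT\norm[W^{1,\infty}(\Om)]{v(\cdot,t)}<\infty$, completing the proof.

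The main obstacle is step (2)–(3): one must verify that each iteration produces a \emph{strict} improvement in the exponent and that the scheme terminates after finitely many steps (rather than converging to $N$ from below). This is the usual delicate point in chemotaxis bootstrap arguments — the gain per step is $\frac1p-\frac1{q_2}$, which stays bounded below as long as $p$ stays away from the threshold, so one argues that if the exponents $q_k$ increased but stayed bounded by $N$, the limiting exponent $q_\infty\le N$ would have to satisfy an equation forcing $q_\infty$ to exceed $N$, a contradiction; hence the process overshoots $N$ in finitely many steps. One must also handle the borderline integrability of the time-kernel carefully (choosing intermediate exponents strictly inside the admissible ranges, and treating the integral near $s=t$ and near $s=0$ separately, using the uniform bound on $\norm[L^p]{u}$ on all of $(0,T_{\max})$). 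Since the precise constants do not matter — only finiteness, uniformly in $T_{\max}\le\infty$ — these are routine once the exponent arithmetic is set up; I will not grind through it here.
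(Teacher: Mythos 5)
Your plan follows the semigroup route that the paper only alludes to (the paper gives no proof of this proposition; it is quoted from \cite[Lemma 3.2]{bellomo2015toward}, with the remark that known proofs use either Neumann heat semigroup estimates or a coupled energy functional for $\int_\Omega u^{p_1}$ and $\int_\Omega|\nabla v|^{2q}$), and for $p>\frac{2N}{N+1}$ — in particular for all $p>\frac N2$ when $N\ge3$ — your exponent arithmetic does close up. But there is a genuine gap at the \emph{initialization} of the bootstrap in the remaining cases, which are exactly the delicate ones: take $N=2$ and $p\in(1,\frac43]$. From $\sup_t\|u(\cdot,t)\|_{L^p}<\infty$ the $v$-equation yields $\sup_t\|\nabla v(\cdot,t)\|_{L^{q_1}}<\infty$ only for $q_1<\frac{Np}{N-p}$, i.e. $\frac1{q_1}>\frac1p-\frac1N$, and H\"older then places $u\nabla v$ in $L^r$ with $\frac1r=\frac1p+\frac1{q_1}>\frac2p-\frac1N\ge1$, so $r<1$ and the $L^r$--$L^q$ smoothing estimate for $e^{\tau\Delta}\nabla\cdot$ is simply not applicable. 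Conversely, enforcing $r\ge1$ requires $q_1\ge\frac{p}{p-1}$, which exceeds the available integrability $\frac{Np}{N-p}$ of $\nabla v$ precisely when $p\le\frac{2N}{N+1}$; and any strict gain $q_2>p$ in your step (2) additionally forces $q_1>N$. Hence for $N\le2$ and $\frac N2<p\le\frac{2N}{N+1}$ your iteration cannot take even its first step, and no choice of intermediate exponents repairs this within the pure Duhamel/H\"older scheme. (The point you flag as delicate — termination — is in fact the harmless part: once a fixed $q_1>N$ is available, the decrement of $1/q_k$ per step is at least the fixed amount $\frac1N-\frac1{q_1}$.)

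To cover that range one needs a different ingredient, e.g. the testing/Moser route of \cite{tao2012boundedness}: multiply the first equation by $u^{p_1-1}$, use only $\sup_t\|\nabla v\|_{L^{q_1}}<\infty$ for some $q_1>N$ (which does follow from $p>\frac N2$), and absorb $\int_\Omega u^{p_1}|\nabla v|^2$ by H\"older and Gagliardo--Nirenberg into the dissipation $\int_\Omega|\nabla u^{p_1/2}|^2$, using mass conservation; this raises $u$ to $L^\infty_tL^{p_1}$ for arbitrary finite $p_1$ without ever needing $u\nabla v\in L^1$, after which your steps (3)--(4) apply verbatim. Two smaller corrections: maximal Sobolev regularity gives space--time bounds, not $\sup_t\|v(\cdot,t)\|_{W^{2,p}}$, so your step (1) should be justified by the Neumann semigroup gradient estimates applied to the Duhamel formula for $v$ (which carry the decay factor $e^{-\tau}$ automatically); and in steps (2)--(3) you must use the versions of the $e^{\tau\Delta}\nabla\cdot$ estimates containing a factor $e^{-\lambda_1\tau}$, since otherwise $\int_0^t(t-s)^{-\alpha}\,ds$ grows in $t$ and uniformity up to $T_{\max}=\infty$ is lost. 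Note finally that the gap does not endanger this paper's application — in Lemma \ref{lem:n} the exponent $p\in(\frac N2,N)$ is at one's disposal and may be chosen larger than $\frac{2N}{N+1}$ — but it does affect the proposition as stated, which claims every $p>\frac N2$ for every $N\ge1$.
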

The proof is carried out either by using Neumann heat
semigroup estimates or by studying a 
coupled energy evolution of $\io u^p$ and $\io |\nabla v|^{2q}$ with $p,q$ sufficiently 
large \cite{tao2012boundedness,freitag2016boundedness}.
Generally, the condition in the above proposition can not reach the 
borderline {value}
$p=\frac{N}{2}$. 
{In the special case when $N=2$ and thus $\frac N2=1$}, 
we already mentioned that blow up can
happen even though
 $\io u(\cdot,t)=\io u_0$ is bounded \cite{mizoguchi2014}. Therefore, we cannot expect {that} boundedness
 of $\norm[L^\frac{N}{2}(\Om)]{u(\cdot,t)}$ can prevent blow up.
 However, if we require a little more, {namely that
 $\{u^\frac{N}{2}(\cdot,t)\}_{t\in(0,T_{\max})}$ is not only bounded with respect to the spatial $L^1$-norm,
 but also enjoys an additional equi-integrability property}, 
 we will be able to show global existence and boundedness for the system. 
{Accordingly, the main result in the paper reads as follows:}

\medskip


\begin{thm}\label{thm:bdd}
 Assume that $\Om\subset\R^N$ ($N\ge 2$) is a bounded domain 
 with smooth boundary, {and that} the nonnegative initial data $(u_0,v_0)$
satisfy $u_0\in C^0(\overline{\Om})$ and $v_0\in W^{1,\infty}(\Om)$. 
Let $(u,v)$ be a nonnegative classical solution of \eqref{eq:system}
on $\Om\times(0,T_{\max})$ with maximal existence time $T_{\max}\in(0,\infty]$.
If
 \begin{align}
 \label{thm:con1}
 & \mathop{\sup}\limits_{t\in (0,T_{\max})} \norm[L^\frac{N}{2}(\Om)] {u(\cdot,t)}<\infty,\\
 \label{thm:con2}
 & \mbox{ and }\{u(\cdot,t)^\frac{N}{2}\}_{t\in(0,T_{\max})} \mbox{ is equi-integrable,}
 \end{align}
then  $(u,v)$ is global and bounded.
\end{thm}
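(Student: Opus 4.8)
The plan is to reduce the whole statement to an application of Proposition \ref{lem:lp}: it is enough to produce some exponent $p>\tfrac N2$ (I will take it only slightly above $\tfrac N2$) for which $\sup_{t\in(0,T_{\max})}\|u(\cdot,t)\|_{L^p(\Om)}<\infty$. I would start from the usual $L^p$-testing of the first equation: multiplying by $u^{p-1}$, integrating by parts over $\Om$, using the boundary condition and Young's inequality on the chemotactic contribution $\tfrac{2(p-1)}p\io u^{p/2}\nabla u^{p/2}\cdot\nabla v$, one obtains
\[
\frac1p\,\frac{d}{dt}\io u^p+\frac{2(p-1)}{p^2}\io|\nabla u^{p/2}|^2\ \le\ \frac{p-1}{2}\io u^p|\nabla v|^2\fat .
\]
In parallel I would record an auxiliary estimate for $v$: from \eqref{thm:con1} and the smoothing properties of the Neumann heat semigroup applied to $v_t=\Delta v-v+u$, one has $\sup_{t\in(0,T_{\max})}\|\nabla v(\cdot,t)\|_{L^r(\Om)}<\infty$ for every $r\in[1,N)$. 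Everything then comes down to controlling the cross term $\io u^p|\nabla v|^2$. At $p=\tfrac N2$ this term is scaling-critical — $u$ lies in $L^{N/2}$ and $\nabla v$ in every $L^r$ with $r<N$ but, in general, not in $L^N$ — so naive Hölder or Gagliardo--Nirenberg estimates all fail by a hair; this is exactly the gap that \eqref{thm:con2} is meant to close.

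The heart of the argument, and the place where the interpolation inequality of the paper is used, is the absorption of $\io u^p|\nabla v|^2$ into a small multiple of $\io|\nabla u^{p/2}|^2$ plus controlled terms. The mechanism I would use is a splitting of $u$ at a high level. By the equi-integrability \eqref{thm:con2} together with \eqref{thm:con1}, for every $\eta>0$ there is a threshold $K=K(\eta)$, uniform in $t$, with $\io_{\{u(\cdot,t)>K\}}u^{\frac N2}\le\eta$ for all $t\in(0,T_{\max})$; write $u=\min(u,K)+(u-K)_+=:\underline u+\widehat u$. On the bounded part $\underline u\le K$ the term is dominated by $K^p\io|\nabla v|^2$, which the auxiliary estimate keeps bounded (for $N\ge 3$, since $2<N$). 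On the part $\widehat u$, whose critical mass $\io\widehat u^{\,N/2}\le\eta$ is small, the interpolation inequality — an Ehrling-type strengthening of Gagliardo--Nirenberg in which a small critical $L^{N/2}$-mass is converted into a small prefactor on the Dirichlet energy — yields a bound of the form $\io\widehat u^{\,p}|\nabla v|^2\le C\eta^{\gamma}\io|\nabla u^{p/2}|^2+C_\eta$ for some $\gamma>0$; in carrying this out one re-expresses the cross term, via the $v$-equation, so that its genuinely critical content is of the type $\io u^{p+1}$, on which the relevant Gagliardo--Nirenberg interpolation exponent is precisely the borderline value $\theta=\tfrac p{p+1}$ (so that $\theta\cdot\tfrac{2(p+1)}p=2$), which is exactly why a prefactor $\big(\io\widehat u^{\,N/2}\big)^{2/N}$ multiplies $\io|\nabla u^{p/2}|^2$. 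Choosing $\eta$ small enough we reach
\[
\frac1p\,\frac{d}{dt}\io u^p+\frac{p-1}{p^2}\io|\nabla u^{p/2}|^2\ \le\ C\fat .
\]

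To close, I would invoke a second, plain, Ehrling-type inequality: since $\io u^{N/2}$ is bounded by \eqref{thm:con1} and $\tfrac Np<2$, Gagliardo--Nirenberg gives $\io u^p\le\delta\io|\nabla u^{p/2}|^2+C_\delta$ for every $\delta>0$; substituting with $\delta$ small converts the differential inequality into $\tfrac{d}{dt}\io u^p+c\io u^p\le C'$ with $c>0$, whence $\sup_{t\in(0,T_{\max})}\io u^p<\infty$ by an elementary comparison, and Proposition \ref{lem:lp} then yields global existence and boundedness. The step I expect to be the main obstacle is the absorption in the second paragraph: proving, and correctly deploying, the interpolation inequality that produces a genuinely small constant in front of the dissipation once the critical $L^{N/2}$-mass is small — the naive borderline Gagliardo--Nirenberg gives only a bounded, not arbitrarily small, constant, and it is exactly the quantitative content of equi-integrability that rescues this. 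Two technical points to handle separately: when $N=2$ one has $\nabla v\notin L^2$ a priori, so the scheme is first run with $p=1+\eta_0$ to reach a bound in some $L^{1+\eta_0}$ (already larger than $\tfrac N2=1$), driven by the same splitting/interpolation idea; and if one prefers a coupled functional $\io u^p+\io|\nabla v|^{2q}$ instead of the bare $L^p$-testing, the boundary terms $\io_{\partial\Om}|\nabla v|^{2q-2}\partial_\nu|\nabla v|^2$ arising from integration by parts are absorbed by standard trace estimates.
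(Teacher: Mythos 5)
Your overall reduction (prove $\sup_{t\in(0,T_{\max})}\|u(\cdot,t)\|_{L^p(\Omega)}<\infty$ for some $p>\frac N2$ and then invoke Proposition \ref{lem:lp}) is exactly the paper's, and your level-set splitting $u=\min(u,K)+(u-K)_+$, with equi-integrability providing a uniformly small tail mass $\int_{\{u>K\}}u^{N/2}\le\eta$, is in principle a reasonable substitute for the paper's Lemma \ref{lem:interpolation1}; the borderline Gagliardo--Nirenberg bookkeeping giving the prefactor $\big(\int_\Omega \widehat u^{\,N/2}\big)^{2/N}$ in front of the Dirichlet energy is also correct as far as it goes. The genuine gap is the step where you ``re-express the cross term, via the $v$-equation, so that its genuinely critical content is of the type $\int_\Omega u^{p+1}$.'' That substitution is available only in the parabolic--elliptic simplification $-\Delta v=u-v$; in the fully parabolic system \eqref{eq:system} one has $-\Delta v=-v_t-v+u$, and the resulting term $\int_\Omega u^p v_t$ (equivalently, the need to dominate $\|\Delta v(\cdot,t)\|_{L^p(\Omega)}$, or $\|\nabla v(\cdot,t)\|_{L^{Np/(N-p)}(\Omega)}$, by $\|u(\cdot,t)\|_{L^p(\Omega)}$ pointwise in time) is precisely the known obstruction at the critical exponent. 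Your auxiliary semigroup estimate only gives $\nabla v\in L^\infty_t L^r_x$ for $r<N$, which falls short of the integrability $2\theta'=\frac{Np}{N-p}>N$ that the cross term requires. This is where the paper spends its effort: it retains the dissipation in a variation-of-constants inequality and uses maximal Sobolev regularity with exponential weights (Lemma \ref{chap1:lem:maximal}) to convert $\int e^{-\lambda(t-s)}\|\Delta v\|_{L^p}^{2\lambda}\,ds$ into $\int e^{-\lambda(t-s)}\|u\|_{L^p}^{2\lambda}\,ds$, and only then applies the small-constant interpolation of Lemma \ref{lem:interpolation1} to absorb this into $\int_\Omega|\nabla u^{p/2}|^2$ under the time integral. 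You mention the coupled functional $\int_\Omega u^p+\int_\Omega|\nabla v|^{2q}$ only in passing, and only to dismiss boundary terms; without carrying out either that coupling or a maximal-regularity argument, your central claim $\int_\Omega\widehat u^{\,p}|\nabla v|^2\le C\eta^{\gamma}\int_\Omega|\nabla u^{p/2}|^2+C_\eta$, asserted pointwise in time, is unproved, and it is the heart of the matter.

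Two further points. First, your bounded-part estimate needs $\sup_t\int_\Omega|\nabla v|^2<\infty$, which your semigroup bound yields only for $N\ge3$; for $N=2$ choosing $p=1+\eta_0$ does not help, since the factor $|\nabla v|^2$ is unchanged, so the two-dimensional case is not covered as written. Second, for $p<2$ (unavoidable when $N=2$, possible when $N=3$) the comparison $\int_\Omega|\nabla(u-K)_+^{p/2}|^2\le C\int_\Omega|\nabla u^{p/2}|^2$ is not automatic, because $\big((u-K)_+/u\big)^{p-2}\ge1$ and is unbounded near the level set $\{u=K\}$; so even the absorption of the tail term into the dissipation needs an extra device (for instance testing with powers of $(u-K)_+$ rather than of $u$). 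This is one technical reason the paper avoids truncations altogether and encodes equi-integrability directly in the covering argument of Lemma \ref{lem:interpolation1}.
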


Recalling De la Vall\'ee-Poussin Theorem, we 
obtain the following equivalent extension criterion:
\begin{cor}\label{chap1:corollary}
Assume that $(u,v)$ be a nonnegative classical solution of
\eqref{eq:system} on $\Om\times(0,T_{\max})$ with $T_{\max}\in(0,\infty]$.
Let  $f: [0,\infty) \to [0,\infty)$ be continuous and such that
\[
	\mathop{\lim}\limits_{s\to\infty} \frac{f(s)}{s^\frac N2}=\infty. 
\]
If we have
\begin{align}
\label{thm:con3}
\mathop{\sup}\limits_{t\in(0,T_{\max})}\io f(u(\cdot,t))<\infty,
\end{align}
then $(u,v)$  is global and bounded.
\end{cor}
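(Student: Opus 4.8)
The plan is to deduce this directly from Theorem \ref{thm:bdd} via the De la Vallée–Poussin theorem, so the only real work is to translate the hypothesis \eqref{thm:con3} into the two conditions \eqref{thm:con1} and \eqref{thm:con2}. First I would record the pointwise lower bound that the superlinear growth of $f$ relative to $s^{N/2}$ gives us: since $f(s)/s^{N/2}\to\infty$, for every $M>0$ there is $s_M>0$ with $f(s)\ge M s^{N/2}$ for all $s\ge s_M$, and on the compact set $[0,s_M]$ both $f$ and $s^{N/2}$ are bounded. Hence there is a constant $C_M$ (depending on $M$ and $s_M$) such that $s^{N/2}\le \frac1M f(s)+C_M$ for all $s\ge 0$. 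Integrating over $\Om$ and using \eqref{thm:con3} immediately yields
\[
\mathop{\sup}\limits_{t\in(0,T_{\max})}\io u(\cdot,t)^{\frac N2}\le \frac1M\mathop{\sup}\limits_{t\in(0,T_{\max})}\io f(u(\cdot,t))+C_M|\Om|<\infty,
\]
which is \eqref{thm:con1}.

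For the equi-integrability \eqref{thm:con2}, the key step is to invoke the De la Vallée–Poussin theorem in the direction that produces a test function: the condition $f(s)/s^{N/2}\to\infty$ means precisely that $\Phi(s):=f(s^{2/N})$ is superlinear, i.e. $\Phi(\sigma)/\sigma\to\infty$ as $\sigma\to\infty$, and $\Phi$ is continuous and nonnegative. By the De la Vallée–Poussin criterion, a family $\{g_t\}$ in $L^1(\Om)$ is equi-integrable if $\sup_t\io\Phi(|g_t|)<\infty$ for some such $\Phi$. Applying this with $g_t=u(\cdot,t)^{N/2}$ gives $\io\Phi(u(\cdot,t)^{N/2})=\io f(u(\cdot,t))$, which is bounded uniformly in $t$ by \eqref{thm:con3}; hence $\{u(\cdot,t)^{N/2}\}_{t\in(0,T_{\max})}$ is equi-integrable, which is \eqref{thm:con2}. (One should check that $\Phi$ can, without loss of generality, be taken convex and nondecreasing as required by the classical statement of the theorem — this is standard: one replaces $\Phi$ by the largest convex minorant that still has superlinear growth, or simply notes that the relevant direction of De la Vallée–Poussin only needs a nonnegative function with $\Phi(\sigma)/\sigma\to\infty$, and such refinements do not affect the integral bound.)

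With \eqref{thm:con1} and \eqref{thm:con2} now verified, Theorem \ref{thm:bdd} applies and gives that $(u,v)$ is global and bounded, completing the proof. I do not expect any genuine obstacle here: the argument is essentially a dictionary translation, and the only point requiring a little care is making sure the growth function $f$ (equivalently $\Phi$) fits the precise hypotheses of whichever form of the De la Vallée–Poussin theorem is cited — in particular handling the behaviour of $f$ near $s=0$ and the passage to a convex nondecreasing test function — but this is routine and the bound $s^{N/2}\le \frac1M f(s)+C_M$ already absorbs the low-$s$ contribution.
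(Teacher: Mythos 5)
Your proposal is correct and follows essentially the same route the paper intends: the paper derives this corollary from Theorem \ref{thm:bdd} simply by ``recalling the De la Vall\'ee-Poussin Theorem,'' and your argument just makes this explicit --- the pointwise bound $s^{N/2}\le \frac1M f(s)+C_M$ giving \eqref{thm:con1}, and the superlinear test function $\Phi(\sigma)=f(\sigma^{2/N})$ applied to $u^{N/2}$ giving \eqref{thm:con2}. Your parenthetical remark that the relevant direction of the criterion needs only superlinear growth (plus the $L^1$ bound on a finite-measure domain) rather than convexity is exactly the right point of care, so no gap remains.
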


The above corollary {inter alia} shows that
the boundedness of $\io u^\frac N2 \log u$ is sufficient 
for our conclusion, 
which is obviously not covered by Proposition \ref{lem:lp}.

On the other hand, Corollary \ref{chap1:corollary} also
improves the previous knowledge in the two-dimensional Keller-Segel model;
it is {known} that the
boundedness of $\io u\log u$ and $\io |\nabla v|^2$ can exclude blow up 
\cite[Lemma 3.3]{bellomo2015toward}. Now we can immediately remove the requirement on
$\io |\nabla v|^2$. Actually, in the simplified parabolic-elliptic 
system where 
the second equation in \eqref{eq:system} is replaced by $\Delta v-v+u=0$, 
a crucial elliptic estimate shows that the boundedness of $\io |\nabla v|^2$
already results from the boundedness of $\io u\ln u$ \cite[Lemma A.4]{tao2014energy}. Thus 
we know the solution is bounded only if
 $\io u\ln u$ is bounded without applying the current result. 
However, {since a} corresponding estimate for $\io |\nabla v|^2$ in a parabolic equation {appears to be}
lacking, the outcome of {the above} corollary seems not trivial in the fully parabolic model. 
Moreover, the condition can be weakened to the boundedness of the
$L^1$-norm of {essentially any superlinear functional}
of $u$, e.g. $\io u\log\log {(u+e)}$.

Additionally, by virtue of an equivalent definition of equi-integrability,
Theorem \ref{thm:bdd} can be  {rephrased} in the following way:
\begin{cor}
Let $(u,v)$ be a classical solution of \eqref{eq:system} on $\Om\times(0,T_{\max})$  with $T_{\max}\in(0,\infty]$.
For all $\ep>0$ there is $\delta>0$ such that for any measurable set $E\subset \Om$ {with}
$|E|<\delta$, 
if we have
\[\mathop{\sup}\limits_{t\in(0,T_{\max})}\int_E u^\frac{N}{2}(\cdot,t)<\ep,\] 
then 
\[\mathop{\sup}\limits_{t\in(0,T_{\max})} \norm[\Lin]{u(\cdot,t)}<\infty.\]
\end{cor}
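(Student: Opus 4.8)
The plan is to deduce this corollary directly from Theorem~\ref{thm:bdd} by showing that the stated hypothesis is exactly a reformulation of conditions \eqref{thm:con1}--\eqref{thm:con2}. Recall the standard ``$\ep$--$\delta$'' characterization of equi-integrability: a family $\{w_t\}_{t\in(0,T_{\max})}\subset L^1(\Om)$ is equi-integrable if and only if it is bounded in $L^1(\Om)$ \emph{and} for every $\ep>0$ there is $\delta>0$ such that $\int_E |w_t|<\ep$ whenever $|E|<\delta$, uniformly in $t$. So I must verify both ingredients for $w_t:=u^{N/2}(\cdot,t)$ under the hypothesis of the corollary, and then invoke Theorem~\ref{thm:bdd}.

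First I would extract the uniform $L^{N/2}$ bound \eqref{thm:con1}. Fix $\ep=1$ in the hypothesis; this furnishes a $\delta_0>0$ such that $\sup_t\int_E u^{N/2}(\cdot,t)<1$ for every measurable $E\subset\Om$ with $|E|<\delta_0$. Now partition $\Om$ into finitely many measurable pieces $E_1,\dots,E_m$ each of measure less than $\delta_0$ (possible since $\Om$ is bounded, e.g.\ $m=\lceil |\Om|/\delta_0\rceil+1$ equal slices). Summing, $\io u^{N/2}(\cdot,t)=\sum_{j=1}^m\int_{E_j}u^{N/2}(\cdot,t)<m$ for all $t\in(0,T_{\max})$, which is precisely \eqref{thm:con1} with the uniform bound $m^{2/N}$.

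Next, the equi-integrability condition \eqref{thm:con2} is \emph{verbatim} the ``$\ep$--$\delta$'' part of the hypothesis, once combined with the $L^1$-boundedness just established: given $\ep>0$, the hypothesis directly supplies the required $\delta>0$. Hence both \eqref{thm:con1} and \eqref{thm:con2} hold, Theorem~\ref{thm:bdd} applies, and $(u,v)$ is global and bounded; in particular $\sup_{t\in(0,T_{\max})}\norm[\Lin]{u(\cdot,t)}<\infty$, which is the asserted conclusion. I do not anticipate a genuine obstacle here: the only point requiring a line of care is the passage from the single-set smallness hypothesis to a global $L^1$ bound, which is handled by the finite-partition argument above; everything else is the textbook equivalence between the two formulations of equi-integrability, so the ``proof'' is essentially a citation of that equivalence together with Theorem~\ref{thm:bdd}.
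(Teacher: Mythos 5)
Your proposal is correct and follows exactly the route the paper intends: the paper states this corollary without a separate proof, remarking only that it is Theorem~\ref{thm:bdd} rephrased via the $\ep$--$\delta$ characterization of equi-integrability, and your argument simply fills in that equivalence, including the finite-partition step showing the $\ep$--$\delta$ condition already yields the uniform $L^{\frac N2}$ bound \eqref{thm:con1} on the bounded domain $\Om$. Nothing further is needed.
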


We {note that this property resembles the feature of } $\ep$-regularity derived in 
\cite{sugiyama2010varepsilon}
for a porous medium type parabolic-elliptic Keller-Segel model in the 
whole space or for a corresponding degenerate fully parabolic system in a bounded domain \cite{ishige2017}. 
Since our result in the above corollary is independent of time, {this analogy is further underlined in the following consequence describing
the behavior of unbounded solutions, which also applys infinite time blow-up.}
\begin{thm}\label{cor:blowup}
 Assume that $\Om\subset\R^N$ ($N\ge 2$) is a bounded domain 
 with smooth boundary.
 Let $(u,v)$ be a classical solution of \eqref{eq:system} on $\Om\times(0,T_{\max})$ 
 with $T_{\max}\in(0,\infty]$.
Suppose that 
\[
\mathop{\sup}\limits_{t\in(0,T_{\max})}\norm[\Lin]{u(\cdot,t)}=\infty.
\] 
Then $\{u^\frac N2(\cdot,t)\}_{t\in(0,T_{\max})}$ is 
not equi-integrable. In other words, 
there are $\ep_0>0$, and $x_0\in\Om$ such that for all $\rho>0$,
\[
\mathop{\sup}\limits_{t\in(0,T_{\max})} 
\int_{B_\rho(x_0) \cap \Om} u^{\frac N2}(\cdot,t)>\ep_0.
\]
\end{thm}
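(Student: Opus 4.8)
The plan is to argue by contraposition: assume that $\{u^{N/2}(\cdot,t)\}_{t\in(0,T_{\max})}$ \emph{is} equi-integrable, and deduce that $\sup_{t\in(0,T_{\max})}\norm[\Lin]{u(\cdot,t)}<\infty$, which in particular forces $T_{\max}=\infty$ and contradicts the hypothesis. The key observation is that equi-integrability of a family that is \emph{already bounded in $L^1$} is essentially all that Theorem \ref{thm:bdd} requires; so the first step is to show that the equi-integrability assumption alone already yields the bound \eqref{thm:con1}, i.e.\ $\sup_{t}\io u^{N/2}(\cdot,t)<\infty$. This is immediate from the definition of equi-integrability applied with, say, $\ep=1$: there is a $\delta>0$ so that $\int_E u^{N/2}<1$ whenever $|E|<\delta$; covering $\Om$ by finitely many (say $k$, depending only on $\delta$ and $|\Om|$) sets of measure $<\delta$ gives $\io u^{N/2}(\cdot,t)<k$ uniformly in $t$. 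Hence both \eqref{thm:con1} and \eqref{thm:con2} hold, and Theorem \ref{thm:bdd} applies directly to give $\sup_{t\in(0,T_{\max})}\norm[\Lin]{u(\cdot,t)}<\infty$, the desired contradiction.

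For the ``in other words'' reformulation I would spell out what the \emph{failure} of equi-integrability means and then localize it. Negating the definition: there is $\ep_0>0$ such that for every $\delta>0$ there exist a measurable $E_\delta\subset\Om$ with $|E_\delta|<\delta$ and a time $t_\delta\in(0,T_{\max})$ with $\int_{E_\delta}u^{N/2}(\cdot,t_\delta)\ge\ep_0$. Applying this along a sequence $\delta_j\downarrow 0$ produces sets $E_j$ with $|E_j|\to 0$ carrying at least $\ep_0$ of the mass of $u^{N/2}(\cdot,t_j)$. To pin the concentration to a single point, pick a point $x_j\in\overline{E_j}$; by compactness of $\overline\Om$, pass to a subsequence with $x_j\to x_0\in\overline\Om$ (one may take $x_0\in\Om$ after a harmless adjustment, or simply state the conclusion with $B_\rho(x_0)\cap\Om$ as in the statement, which tolerates boundary points). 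Then for any fixed $\rho>0$, once $j$ is large enough we have $\operatorname{diam}(E_j)\le|E_j|^{1/N}\cdot(\text{const})$ small and $x_j$ within $\rho/2$ of $x_0$, so $E_j\subset B_\rho(x_0)\cap\Om$; consequently $\int_{B_\rho(x_0)\cap\Om}u^{N/2}(\cdot,t_j)\ge\int_{E_j}u^{N/2}(\cdot,t_j)\ge\ep_0$, and taking the supremum over $t$ gives the claimed lower bound with this $\ep_0$ and $x_0$.

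The only mild subtlety — and the step I would flag as requiring a little care rather than being purely routine — is the passage from ``the exceptional sets $E_j$ have vanishing measure'' to ``the exceptional sets eventually sit inside a fixed ball $B_\rho(x_0)$''. Small measure does not by itself imply small diameter, so one cannot literally bound $\operatorname{diam}(E_j)$ by a power of $|E_j|$. The clean fix is to not insist on controlling all of $E_j$: choose $x_j\in E_j$ arbitrarily and replace $E_j$ by $E_j\cap B_{\rho/2}(x_j)$; this may lose some mass, so instead one runs the negation-of-equi-integrability argument from the start with the \emph{additional} stipulation that the witnessing set be contained in a ball of a prescribed radius. Concretely, fix $\rho>0$; cover $\Om$ by finitely many balls of radius $\rho/2$; if $\{u^{N/2}(\cdot,t)\}$ were equi-integrable \emph{relative to subsets of each such ball} the finite cover would restore full equi-integrability, contradiction — so for some ball $B_{\rho/2}(y)$ the restricted family fails to be equi-integrable, giving $\ep_0(\rho)>0$ and sets $E_j\subset B_{\rho/2}(y)$ with the mass bound. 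A diagonal/compactness argument over a sequence $\rho\downarrow 0$ then produces a single limiting center $x_0$ and a single $\ep_0$ that works for all $\rho$, which is exactly the asserted statement. Everything else is a direct invocation of Theorem \ref{thm:bdd} and elementary measure theory.
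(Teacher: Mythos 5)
Your first paragraph is essentially the paper's proof: the paper also argues by contradiction, assuming equi-integrability of $\{u^{N/2}(\cdot,t)\}_{t\in(0,T_{\max})}$ and invoking Theorem \ref{thm:bdd} to get $\sup_{t\in(0,T_{\max})}\|u(\cdot,t)\|_{L^\infty(\Om)}<\infty$, contradicting the hypothesis. Your explicit remark that equi-integrability over the bounded domain $\Om$ already forces $\sup_t\io u^{N/2}(\cdot,t)<\infty$, so that \eqref{thm:con1} need not be assumed separately, is correct and is a step the paper leaves implicit. Up to this point the proposal is fine and matches the paper.

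The second half, however, contains a genuine gap, exactly at the spot you flagged and did not really repair. Your argument there is purely measure-theoretic: it uses nothing about $u$ beyond the failure of equi-integrability. But for general families the implication ``not equi-integrable $\Rightarrow$ there exist $\ep_0>0$ and a single point $x_0$ with $\sup_t\int_{B_\rho(x_0)\cap\Om}u^{N/2}>\ep_0$ for \emph{all} $\rho>0$'' is false: take $f_j=j\chi_{A_j}$ with $A_j\subset\Om$ of measure $1/j$ spread uniformly over $\Om$ (a fine grid of tiny balls). Then $\int_{A_j}f_j$ stays bounded below while $|A_j|\to0$, so $\{f_j\}$ is not equi-integrable, yet for every $x$ one has $\sup_j\int_{B_\rho(x)}f_j\le C\rho^N\to0$ as $\rho\to0$. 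This is precisely why your patched argument does not close: for each fixed $\rho$ the finite-cover pigeonhole gives a bad ball of radius $\rho/2$ and a constant $\ep_0(\rho)>0$, but $\ep_0(\rho)$ may tend to $0$ as $\rho\downarrow0$, and no diagonal or compactness argument can manufacture a $\rho$-independent $\ep_0$ from this. To obtain the localized (``in other words'') form one must go back into the PDE machinery rather than into measure theory: observe that the proof of Lemma \ref{lem:interpolation1} uses the hypothesis $\varphi\in\mathcal F_\delta$ only through smallness of $\int_B|\widetilde\varphi|^p$ on balls of small radius, so the weaker condition ``$\sup_t\sup_x\int_{B_\eta(x)\cap\Om}u^{N/2}(\cdot,t)\to0$ as $\eta\to0$'' already yields boundedness by the argument of Lemma \ref{lem:n}; if $u$ is unbounded this concentration function cannot vanish, which produces $\ep_0>0$, radii $\eta_j\downarrow0$, centers $x_j$ and times $t_j$ with $\int_{B_{\eta_j}(x_j)\cap\Om}u^{N/2}(\cdot,t_j)\ge\ep_0$, and compactness of $\overline\Om$ then gives the single concentration point $x_0$ (possibly on $\partial\Om$). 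In fairness, the paper's own two-line proof only establishes non-equi-integrability and treats the localized statement as a mere rephrasing; but your proposal explicitly undertakes to prove that rephrasing, and as written the deduction does not work.
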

\section{An interpolation inequality}
In the analysis of chemotaxis models, the Gagliardo-Nirenberg inequality is frequently used, especially in
the style of the following form
\begin{align}\label{chap1:gn}
 \norm[L^q(\Om)]{\varphi} \le 
  C_1\norm[L^r(\Om)]{\nabla \varphi}^{a} \norm[\Lp]{\varphi}^{1-a}
   +C_2 \norm[L^{p}(\Om)]{\varphi} \mbox{ for all } \varphi\in W^{1,r}(\Om),
 \end{align}
 where $a={\frac{\frac{N}{p}-\frac{N}{q}}{1-\frac Nr+\frac{N}{p}}} \in (0,1)$ \cite[Theorem 10.1]{friedman_book}. 
 Here the constant $C_1>0$ depends on $p,q,r$ and $\Om$. 
 When applying the Gargliardo-Nirenberg inequality, we usually require the 
 exponent $a$ {to be} strictly less
than a given power in order to control a target term. 
One can imagine that if $C_1>0$ could be chosen arbitrarily small, 
we would be able to deal with more subtle critical cases 
\cite{biler1994debye}.

The purpose of this section is to investigate a kind of interpolation inequality 
with the aforementioned ambition
that the constant $C_1$ can be arbitrarily small. However, this is not generally true. 
Following the idea from \cite[Lemma 5.1]{lou2015global},
we actually show that such {an}
interpolation inequality holds
for the class of 
equi-integrable functions. 
This
is similar to that of  \cite[Theorem 3]{biler1994debye} 
and \cite[Lemma 5.1]{lou2015global}. 
\begin{lem}\label{lem:interpolation1} 
 Let $\Om\subset \R^N$ be bounded with smooth boundary. Let
 $r\ge1$, $0<q< \frac{Nr}{(N-r)_+}$.  For any $0<\theta<q$, we define
 \begin{equation}\label{chap1:pq}
 \begin{array}{llc}
 &
 p:=
 \left\{
 \begin{array}{llc}
 &{N(\frac qr-1)}, & \mbox{ if } q>r,\\
 &\theta ,& \mbox{ if } q\le r,
 \end{array}
 \right.
 &
q_0:=
 \left\{
 \begin{array}{llc}
 & q, &  \mbox{ if } q>r,\\
 & r(1+\frac{p}{N}),& \mbox{ if } q\le r.
 \end{array}
 \right.
 \end{array}
 \end{equation}
 \[a:= {\frac{\frac{N}{p}-\frac{N}{q}}{1-\frac Nr+\frac{N}{p}}}\in (0,1), 
 \quad b:={\frac{\frac{1}{p}-\frac{1}{q}}
{\frac{1}{p}-\frac{1}{q_0}}}\in(0,1].
 \]
Let $\delta:(0,1)\to (0,\infty)$ be nondecreasing.
Then for each $\ep>0$, we can find $C_\ep>0$ such that
 \begin{align}\label{eq:interpolation}
   \norm[L^q(\Om)]{\varphi} \le 
   \ep\norm[L^r(\Om)]{\nabla \varphi}^{a} \norm[\Lp]{\varphi}^{1-b}
   +C_\ep \norm[L^{p}(\Om)]{\varphi}^{(1-\frac Nr+\frac{N+r}{q_0})b+(1-b)}
   +C_\ep \norm[L^{p}(\Om)]{\varphi}
   +C_\ep  \norm[L^{p}(\Om)]{\varphi}^{1-b}.
 \end{align}
is valid for any
\begin{align}\nn
\varphi\in \mathcal F_\delta:=\bigg\{\psi\in W^{1,r}(\Om) \ \bigg| \ 
\mbox{ For all } \ep'\in(0,1), &\mbox{ we have } \int_{E} \psi^p<\ep'  \mbox{ for all measurable {sets} }\\
\label{chap1:f_delta}&
E \subset \Om \mbox{ with } |E|<\delta(\ep')
 \bigg\}.
\end{align}
\end{lem}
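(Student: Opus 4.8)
The plan is to adapt the truncation device of \cite[Lemma~5.1]{lou2015global}: split $\varphi$ at a height $M$ chosen so that the super-level set $\{|\varphi|>M\}$ has measure just below the threshold $\delta(\cdot)$ built into $\mathcal F_\delta$; the equi-integrability then guarantees that the $L^p$-mass of the ``tall part'' of $\varphi$ is as small as we please, and it is precisely this smallness that demotes the usual constant $C_1$ in \eqref{chap1:gn} to an arbitrary $\ep$. The auxiliary exponent $q_0$ is there to route the argument through one ordinary application of \eqref{chap1:gn}, after which $L^q$ is recovered by interpolation between $L^p$ and $L^{q_0}$.

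First I would dispose of the exponent bookkeeping: a short computation from \eqref{chap1:pq} shows that, in both cases $q>r$ and $q\le r$, one has $p<q\le q_0<\tfrac{Nr}{(N-r)_+}$ and $b\in(0,1]$, and that the Gagliardo--Nirenberg exponent attached to the target $L^{q_0}(\Om)$, namely $\bar a:=\big(\tfrac Np-\tfrac N{q_0}\big)\big/\big(1-\tfrac Nr+\tfrac Np\big)$, lies in $(0,1)$ and satisfies $\bar a\,b=a$; in fact $\bar a=\tfrac N{N+p}$ when $q\le r$, while $q_0=q$, $b=1$, $\bar a=a$ when $q>r$. Hence the interpolation estimate $\norm[L^q(\Om)]{\psi}\le\norm[\Lp]{\psi}^{1-b}\norm[L^{q_0}(\Om)]{\psi}^{b}$ is legitimate, and \eqref{chap1:gn} may be invoked with $q_0$ in the role of $q$. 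For the core step, assuming $\norm[\Lp]{\varphi}>0$ (otherwise nothing to prove) and fixing $\eta\in(0,1)$ to be chosen last, I would set $M:=\big(\tfrac2{\delta(\eta)}\big)^{1/p}\norm[\Lp]{\varphi}$ and decompose $|\varphi|=\varphi_M+\varphi^M$ with $\varphi_M:=\min\{|\varphi|,M\}$, $\varphi^M:=(|\varphi|-M)_+$, noting $\varphi^M\in W^{1,r}(\Om)$, $|\na\varphi^M|\le|\na\varphi|$ a.e., and $\supp\varphi^M\sub\{|\varphi|>M\}$. Chebyshev's inequality gives $|\{|\varphi|>M\}|\le M^{-p}\io|\varphi|^p=\tfrac12\delta(\eta)<\delta(\eta)$, so the defining property of $\mathcal F_\delta$ (applied with $\ep'=\eta$) yields $\int_{\{|\varphi|>M\}}|\varphi|^p<\eta$, whence $\norm[\Lp]{\varphi^M}<\eta^{1/p}$. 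Since $q_0>p$ and $\varphi_M\le M$, the short part obeys $\norm[L^{q_0}(\Om)]{\varphi_M}^{q_0}\le M^{q_0-p}\norm[\Lp]{\varphi}^p$, i.e.\ $\norm[L^{q_0}(\Om)]{\varphi_M}\le C(\eta)\norm[\Lp]{\varphi}$; feeding $\norm[L^r(\Om)]{\na\varphi^M}\le\norm[L^r(\Om)]{\na\varphi}$ and $\norm[\Lp]{\varphi^M}<\eta^{1/p}$ into \eqref{chap1:gn} at exponent $q_0$ (and using only the crude bound $\norm[\Lp]{\varphi^M}\le\norm[\Lp]{\varphi}$ in the lower-order summand) controls the tall part by $\norm[L^{q_0}(\Om)]{\varphi^M}\le C_1\eta^{(1-\bar a)/p}\norm[L^r(\Om)]{\na\varphi}^{\bar a}+C_2\norm[\Lp]{\varphi}$. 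Adding the two yields a Gagliardo--Nirenberg inequality at exponent $q_0$ with an adjustable leading constant:
\[
\norm[L^{q_0}(\Om)]{\varphi}\;\le\;C_1\,\eta^{(1-\bar a)/p}\,\norm[L^r(\Om)]{\na\varphi}^{\bar a}\;+\;C(\eta)\,\norm[\Lp]{\varphi}.
\]

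It then remains to interpolate. Using $\norm[L^q(\Om)]{\varphi}\le\norm[\Lp]{\varphi}^{1-b}\norm[L^{q_0}(\Om)]{\varphi}^{b}$, the elementary inequality $(x+y)^b\le x^b+y^b$ for $x,y\ge0$ and $b\in(0,1]$, and the identity $\bar a\,b=a$, I would arrive at
\[
\norm[L^q(\Om)]{\varphi}\;\le\;C_1^b\,\eta^{(1-\bar a)b/p}\,\norm[L^r(\Om)]{\na\varphi}^{a}\norm[\Lp]{\varphi}^{1-b}\;+\;C(\eta)^b\,\norm[\Lp]{\varphi},
\]
and then pick $\eta=\eta(\ep)\in(0,1)$ so small that $C_1^b\eta^{(1-\bar a)b/p}\le\ep$. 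This produces $\norm[L^q(\Om)]{\varphi}\le\ep\norm[L^r(\Om)]{\na\varphi}^{a}\norm[\Lp]{\varphi}^{1-b}+C_\ep\norm[\Lp]{\varphi}$ with $C_\ep:=C(\eta(\ep))^b$, which is even slightly stronger than \eqref{eq:interpolation}: both of these summands occur among the (nonnegative) terms on the right of \eqref{eq:interpolation}, so the asserted inequality follows a fortiori.

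The genuinely delicate point is the core step --- in particular, checking that \eqref{chap1:gn} really is applicable with $L^{q_0}$ as target, i.e.\ that $\bar a\in(0,1)$ and $q_0<\tfrac{Nr}{(N-r)_+}$. This is exactly why, when $q\le r$, the auxiliary exponent is pinned to $q_0=r\big(1+\tfrac pN\big)$: that choice forces the harmless value $\bar a=\tfrac N{N+p}<1$, and it is also what makes the precise \emph{form} of \eqref{eq:interpolation} achievable in the subcase $q\le r$ (a direct application at $q$ would give $\ep\norm[L^r(\Om)]{\na\varphi}^a$ rather than the smaller quantity $\ep\norm[L^r(\Om)]{\na\varphi}^a\norm[\Lp]{\varphi}^{1-b}$ demanded when $\norm[\Lp]{\varphi}<1$). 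One must also be disciplined about where the equi-integrability is spent --- on the factor $\norm[\Lp]{\varphi^M}^{1-\bar a}$ in the leading term, where it converts $C_1$ into $\ep$, but not on the lower-order term, for which $\norm[\Lp]{\varphi^M}\le\norm[\Lp]{\varphi}$ is both available and necessary (so that this term stays proportional to $\norm[\Lp]{\varphi}$ rather than degenerating into a bare constant). Everything else --- the triangle inequality for the decomposition $|\varphi|=\varphi_M+\varphi^M$, Chebyshev's inequality for the level set, and Hölder's inequality for the interpolation --- is routine.
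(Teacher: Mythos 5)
Your argument is correct, but it takes a genuinely different route from the paper's. The paper proceeds by horizontal localization: it extends $\varphi$ to $\varphit\in W^{1,r}_0(\R^N)$ supported in a larger domain $\Om'$ (Theorem \ref{thm:extension}, which is why the appendix exists), covers $\overline\Om$ by finitely many small balls with a partition of unity $\{\zeta_j\}$, applies the Sobolev embedding $W^{1,s}_0\embeddedinto L^r$ with $s=\frac{Nr}{N+r}$ to $\varphit^{q/r}\zeta_j^{1/r}$ on each ball, and spends the equi-integrability ball-by-ball through the smallness of $\int_{B_j}|\varphit|^p$ to shrink the gradient coefficient, treating the case $q\le r$ afterwards by the same $L^p$--$L^{q_0}$ interpolation you use. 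You instead truncate vertically at the height $M=(2/\delta(\eta))^{1/p}\norm[\Lp]{\varphi}$: Chebyshev puts the super-level set below the threshold $\delta(\eta)$, the defining property of $\mathcal F_\delta$ is spent once to make $\norm[\Lp]{(|\varphi|-M)_+}$ small, and a single application of \eqref{chap1:gn} at the auxiliary exponent $q_0$ converts this smallness into an arbitrarily small leading constant, while the bounded part is absorbed via $\norm[L^{q_0}(\Om)]{\min\{|\varphi|,M\}}\le C(\eta)\norm[\Lp]{\varphi}$. Your exponent bookkeeping checks out: $p<q\le q_0<\frac{Nr}{(N-r)_+}$ in both cases, $\bar a=\frac{N}{N+p}\in(0,1)$ when $q\le r$, $\bar a\,b=a$ (which is the same identity the paper verifies as $b\cdot\frac{r}{q_0}=a$), and $q_0>1$ in both cases, so the triangle inequality in $L^{q_0}$ is legitimate even when $p=\theta<1$; note that relying on \eqref{chap1:gn} with a possibly sub-unitary lower exponent is something the paper's own proof does as well, so it is not an extra hypothesis. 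What your route buys: it bypasses the extension theorem and the covering/partition-of-unity machinery, handles $q>r$ and $q\le r$ in one stroke, and yields the cleaner two-term estimate $\norm[\Lq]{\varphi}\le\ep\norm[L^r(\Om)]{\nabla\varphi}^{a}\norm[\Lp]{\varphi}^{1-b}+C_\ep\norm[\Lp]{\varphi}$, which indeed implies \eqref{eq:interpolation} since the remaining terms there are nonnegative. What the paper's localization buys is its proximity to the $\ep$-regularity viewpoint (smallness of $\int_B|\varphi|^p$ on small balls), which is the geometric form in which the hypothesis is later interpreted for the Keller--Segel system; your truncation is closer in spirit to the Biler--Hebisch--Nadzieja device the paper cites alongside Lou--Winkler.
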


\begin{proof}
We first consider the case $q>r$, hence $\frac qr-1>0$. 
We abbreviate $s:=\frac{Nr}{N+r}<\min\{N,r\}$. {Then} according to the {Sobolev} 
embedding:
$W_0^{1,s}(\R^N)\hookrightarrow L^r(\R^N)$, there is a constant $c_1>0$ such that
\begin{align}\label{eq:sobolev}
\norm[L^r(\R^N)]{\psi}\le c_1\norm[L^s(\R^N)]{\nabla\psi}
\end{align}
for all $\psi\in W_0^{1,s}(\R^N)$. Let $\Om'$ be a bounded open set such that
$\Om \subseteq \Om'$.
In light of Theorem \ref{thm:extension}, we can find $c_2>0$ and 
extend $\varphi\in W^{1,r}(\Om)$ to  $\widetilde{\varphi}\in W^{1,r}_0(\R^N)$
in such a way that 
 \begin{align}\nn
 &{\varphit}=\varphi \mbox{ a.e.  in }\Om,
 \quad \supp {\varphit} \subset \Om',\\\label{chap1:tildephi1}
 &\norm[L^q(\Om')]{{\varphit}}\le c_2\norm [L^q(\Om)]{{\varphi}},
 \quad  \norm[L^r(\Om')]{\nabla{\varphit}}^r
 \le c_2\norm [L^r(\Om)]{{\nabla\varphi}}^r,
 \end{align}
and that
there is a nondecreasing function $\widetilde\delta:(0,1)\to(0,\infty)$ such that
\begin{align}\nn
\varphit\in\mathcal F_{\widetilde\delta}:=\bigg\{\psi\in W^{1,r}(\Om) \ \bigg| \ 
\mbox{ For all } \ep'\in(0,1), &\mbox{ we have } \int_{E} \psi^p<\ep'  \mbox{ for all measurable sets }\\
&
E \subset \Om' \mbox{ with } |E|<\widetilde\delta(\ep')
 \bigg\}.
\end{align}
Given $\ep>0$, 
let $\ep':=\left(\frac{\ep^q }{2^r (\frac qr)^rc_1 c_2}\right)^{\frac Nr}$
and let $\delta:=\widetilde\delta(\ep')>0$. We have 
 \bea{tildephi_equi}
 \int_{B} |\varphit|^{p}<\ep'
 \eea
 for any ball $B\subset\Om'$ and with radius 
 no bigger
 than 
 $\eta:=\left(\frac{\delta}{w_n}\right)^\frac1N$, 
 where $w_n$ denotes the volume of the unit ball in $\R^N$.

Since $\Om$ is bounded, we can find a family of finite balls $\{B_j\}_{1\le j\le M}$ 
 with radius larger than $\eta$ to cover $\overline{\Om}$ with
 $\overline{\Om}\subset \mathop{\cup}\limits_{1\le j\le M} B_j \subseteq \Om'$. 
Moreover, there exists $c_3>0$ and a smooth partition of unity 
for $\mathop{\cup}\limits_{1\le j\le M} B_j$ is given by 
 a family of nonnegative functions $\{\zeta_j\}_{1\le j\le M}$ satisfying
 \begin{align}\label{eq:zeta}
 \supp \zeta_j\subset B_j,\quad |\nabla {\zeta_j}^\frac{1}{r}|< \frac{c_3}{\eta} \mbox{ for all } 1\le j\le M, \mbox{ and}
 \quad \mathop{\sum}\limits_{j=1}^{j=M} \zeta_j=1.
 \end{align}
 
It can be easily checked that  ${\varphit}^{\frac{q}{r}} \zeta_j^\frac{1}{r} 
\in W_0^{1,s} (B_j)$ since $\varphit\in W_0^{1,r}(\R^N)$ and $q<\frac{Nr}{(N-r)_+}$, therefore
we can invoke
(\ref{eq:sobolev}) and the elementary inequality
\[ 
(a+b)^s\le 2^{s-1} a^s+2^{s-1} b^s   \mbox{ for all $s>1$ and $a,b>0$},
\]
to obtain that
 \begin{align}\nn
  \int_{\Om'} \widetilde{\varphi}^q\zeta_j&=\norm[L^r(B_j)]{\varphit^{\frac{q}{r}} 
  \zeta_j^\frac{1}{r}}^r\\\nn
  &\le c_1\norm[L^s(B_j)]{\nabla(\varphit^{\frac{q}{r}} \zeta_j^\frac{1}{r})}^r\\\nn
  &\le c_1\norm[L^s(B_j)]{\frac{q}{r} 
  \varphit^{\frac{q}{r}-1}\zeta_j^\frac{1}{r}\nabla\varphit
  +\varphit^\frac{q}{r}\nabla\zeta_j^\frac{1}{r}}^r\\\label{lem:1.1}
  &\le c_12^{r-1}(\frac qr)^r\left(\int_{B_j} 
  |\varphit^{\frac{q}{r}-1}\zeta_j^\frac{1}{r}\nabla\varphit|^s\right)^\frac{r}{s}
  +c_12^{r-1}(\frac{c_3}{\eta})^r
  \left(\int_{B_j}\varphit^{s\frac qr}\right)^\frac{r}{s}.
  \end{align}
  On applying H\"older's inequality and \eqref{tildephi_equi}, the first term on the right-hand side of \eqref{lem:1.1} can be estimated as
  \begin{align}
  \nn
    c_12^{r-1}(\frac qr)^r\left(\int_{B_j} 
  |\varphit^{\frac{q}{r}-1}\zeta_j^\frac{1}{r}\nabla\varphit|^s\right)^\frac{r}{s}
  &\le  c_12^{r-1}(\frac qr)^r\left(\int_{B_j}
  |\varphit|^{s(\frac{q}{r}-1)\frac{r}{r-s}}\right)^{\frac{r}{s}-1}
  \left(\int_{\Om'}\zeta_j|\nabla\varphit|^r\right)
  \\\nn
  &=  c_12^{r-1}(\frac qr)^r\left(\int_{B_j} 
  |\varphit|^{N(\frac qr-1)}\right)^{\frac rs-1} \int_{\Om'}\zeta_j|\nabla\varphit|^r\\\nn
  &=  c_12^{r-1}(\frac qr)^r\left(\int_{B_j} |\varphit|^{p}\right)^{\frac rN} \int_{\Om'}\zeta_j|\nabla\varphit|^r
  \\\label{lem:1.2}
  &\le  c_12^{r-1}(\frac qr)^r(\ep')^{\frac rN}  \int_{\Om'}\zeta_j|\nabla\varphit|^r\le \frac{\ep^q}{2c_2} \int_{\Om'}\zeta_j|\nabla\varphit|^r.
 \end{align}
Now we claim that for all $r<q<\frac{Nr}{(N-r)_+}$, there are constants $c_\ep,c_4,c_5>0$ such that
\begin{align}\label{lem:1.3'}
c_12^{r-1}(\frac{c_3}{\eta})^r
  \left(\int_{B_j}|\varphit|^{s\frac qr}\right)^\frac{r}{s}
  \le \frac{\ep^q}{2c_2M}\int_{\Om'}|\nabla\varphit|^r
 +c_\ep \norm[L^{N(\frac qr-1)}(\Om')]{\varphit}^{q-\frac{Nq}{r}+N+r}
 +c_4\norm[L^{N(\frac qr-1)}(\Om')]{\varphit}^\frac{N+r}{N}+c_5\ep^{\frac {q^2}{q-r}}.
\end{align}

If $r<q<\frac{r(N+r)}{N}$, 
let 
$d=\frac{\frac{1}{\frac qr-1}-
\frac{Nr}{sq}}{1-\frac{N}{r}+\frac{1}{\frac qr-1}}$, hence $d\in(0,1)$.  Moreover, since $s<r$, we know that 
 $aq<r$. The Gagliardo-Nirenberg inequality thus implies the existence of $c_4>0$ and $c_\ep>0$ such that
 \begin{align}\nn
 2^{r-1}c_1(\frac{c_3}{\eta})^r\left(\int_{B_j} 
 |\varphit|^\frac{sq}{r}\right)^\frac{r}{s}
  &\le 2^{r-1}c_1(\frac{c_3}{\eta})^r\norm[L^\frac{sq}{r}(\Om')]{\varphit}^q\\\nn
  &\le c_4\norm[L^r(\Om')]{\nabla\varphit}^{dq}
  \norm[L^{N(\frac qr-1)}(\Om')]{\varphit}^{(1-d)q}
  +c_4\norm[L^{N(\frac qr-1)}(\Om')]{\varphit}^q\\\nn
  &= c_4\left(\int_{\Om'}|\nabla \varphit|^r\right)^\frac{dq}{r}
  \left( \int_{\Om'} \varphit^{N(\frac qr-1)}
  \right)^{\frac{(1-d)q}{N(\frac qr-1)}}+c_4\norm[L^{N(\frac qr-1)}(\Om')]{\varphit}^q\\\label{lem:1.3}
  &\le \frac{\ep^q}{2c_2M}\int_{\Om'}|\nabla\varphit|^r
  +c_\ep \norm[L^{N(\frac qr-1)}(\Om')]{\varphit}^{q-\frac{Nq}{r}+N+r}
  +c_4\norm[L^{N(\frac qr-1)}(\Om')]{\varphit}^q.
 \end{align}
 
If $ \frac{r(N+r)}{N}\le q <\frac{Nr}{(N-r)_+}$, hence $\frac{sq}{r}\le N(\frac qr-1)$. We can simply use H\"older's inequality to obtain a constant $c_5>0$ fulfilling
\begin{align}\nn
2^{r-1}c_1(\frac{c_3}{\eta})^r
  \left(\int_{B_j}|\varphit|^{s\frac qr}\right)^\frac{r}{s}
  &\le 
  2^{r-1}c_1(\frac{c_3}{\eta})^r |\Om'|^{1-\frac{sq}{pr}}
   \left(\int_{B_j}|\varphit|^{p}\right)^\frac{q}{p}\le c_5\ep^{\frac {q^2}{q-r}}.
\end{align}
Hence \eqref{lem:1.3'} holds for all $r<q<\frac{Nr}{(N-r)_+}$.
Combining (\ref{lem:1.1}-\ref{lem:1.3'}), we see that for each $1\le j\le M$,
\begin{align}\nn
 \int_{\Om'}|\varphit|^q\zeta_j
& \le \frac{\ep^q}{2c_2} \int_{\Om'}\zeta_j|\nabla\varphit|^r
 +\frac{\ep^q}{2c_2M} \int_{\Om'}|\nabla\varphit|^r\\
 \label{eq:lem:1.4}
 &~~~~~~~~~~
 +c_\ep \norm[L^{N(\frac qr-1)}(\Om')]{\varphit}^{q-\frac{Nq}{r}+N+r} 
 +c_4\norm[L^{N(\frac qr-1)}(\Om')]{\varphit}^q
 +c_5\ep^{\frac {q^2}{q-r}}.
\end{align}
Finally, we obtain from \eqref{eq:lem:1.4} and \eqref{eq:zeta} that 
\begin{align}\nn
 &~~~~ \norm[L^q(\Om)]{\varphi}^q \\\nn
  &\le \norm[L^q(\Om')]{\varphit}^q
  =\int_{\Om'} |\varphit|^q\left(\mathop{\sum}\limits_{j=1}^{j=M} \zeta_j\right)
 =\mathop{\sum}\limits_{j=1}^{j=M}  \int_{\Om'} 
 |\varphit|^q\zeta_j\\\nn
 &\le \mathop{\sum}\limits_{j=1}^{j=M}  
 \left(\frac{\ep^q}{2c_2} \int_{\Om'}\zeta_j|\nabla\varphit|^r
 +\frac{\ep^q}{2c_2M}\int_{\Om'}|\nabla\varphit|^r
 +c_\ep \norm[L^{N(\frac qr-1)}(\Om')]{\varphit}^{q-\frac{Nq}{r}+N+r}
 +c_4\norm[L^{N(\frac qr-1)}(\Om')]{\varphit}^q
 +c_5\ep^{\frac {q^2 }{q-r}}\right)\\\nn
 &\le
 \frac{\ep^q}{2c_2} \int_{\Om'}|\nabla\varphit|^r
 \left( \mathop{\sum}\limits_{j=1}^{j=M} \zeta_j\right)
 +M\left(\frac{\ep^q}{2c_2M}\int_{\Om'}|\nabla\varphit|^r
 +c_\ep \norm[L^{N(\frac qr-1)}(\Om')]{\varphit}^{q-\frac{Nq}{r}+N+r}
 +c_4\norm[L^{N(\frac qr-1)}(\Om')]{\varphit}^q
 +c_5\ep^{\frac {q^2}{q-r}}\right)\\\nn
  &\le \frac{\ep^q}{2c_2}\int_{\Om'}|\nabla\varphit|^r
  +\frac{\ep^q}{2c_2}\int_{\Om'}|\nabla\varphit|^r
  + c_\ep M\norm[L^{N(\frac qr-1)}(\Om')]{\varphit}^{q-\frac{Nq}{r}+N+r} 
  +c_4M\norm[L^{N(\frac qr-1)}(\Om')]{\varphit}^q+c_5M\ep^{\frac {q^2}{q-r}}\\\nn
 & \le \ep^q\int_{\Om}|\nabla\varphi|^r
 + C_\ep \norm[L^{N(\frac qr-1)}(\Om)]{\varphi}^{q-\frac{Nq}{r}+N+r} 
 +C_\ep\norm[L^{N(\frac qr-1)}(\Om)]{\varphi}^q+C_\ep
\end{align}
with some constant $C_\ep >0$.
Note that $b=1$ if $q>r$, taking the $q$-th root on both sides leads to \eqref{eq:interpolation} for the case $q>r$.

If $q\le r$, 
we see that $q_0>r\ge q>\theta$.
The H\"older inequality with 
$b=\frac{\frac{1}{\theta}-\frac{1}{q}}{\frac{1}{\theta}-\frac{1}{q_0}}$ 
shows that
\begin{align}\label{lem:1.4}
\norm[\Lq]{\varphi}
\le
\norm[L^{q_0}(\Om)]{\varphi}^b\norm[L^{\theta}(\Om)]{\varphi}^{1-b}.
\end{align}
Since $q_0>r$, we have already proven that for all $\ep>0$, there is $C_\ep>0$ so that
\begin{align}\nn
\norm[L^{q_0}(\Om)]{\varphi} &\le \left(\ep^\frac{q_0}{b} \norm[L^r(\Om)]{\nabla\varphi}^r
+C_{\ep} \norm[L^\theta(\Om)]{\varphi}^{q_0-\frac{Nq_0}{r}+N+r}
+C_{\ep} \norm[L^\theta(\Om)]{\varphi}^{q}+C_\ep
\right)^{\frac{1}{q_0}}\\\nn
&\le 
\ep^\frac{1}{b} \norm[L^r(\Om)]{\nabla\varphi}^{\frac{r}{q_0}}
+C_{\ep} \norm[L^\theta(\Om)]{\varphi}^{1-\frac{N}{r}+\frac{N+r}{q_0}}
+C_{\ep} \norm[L^\theta(\Om)]{\varphi}
+C_\ep,
\end{align}
which combined with the previous interpolation inequality \eqref{lem:1.4} yields that
\begin{align}\nn
 \norm[\Lq]{\varphi}
 &\le \left(\ep^\frac{1}{b} \norm[L^r(\Om)]{\nabla\varphi}^{\frac{r}{q_0}}
+C_\ep \norm[L^\theta(\Om)]{\varphi}^{1-\frac{N}{r}+\frac{N+r}{q_0}}
+C_\ep \norm[L^\theta(\Om)]{\varphi}
+C_\ep
\right)^b \norm[L^{\theta}(\Om)]{\varphi}^{1-b}.
\\\nn
&\le 
\ep \norm[L^r(\Om)]{\nabla\varphi}
^{b\cdot\frac{r}{q_0}} 
\norm[L^\theta(\Om)]{\varphi}^{1-b}
+C_\ep \norm[L^\theta(\Om)]{\varphi}^{(1-\frac Nr+\frac{N+r}{q_0})b+1-b}
+C_\ep \norm[L^\theta(\Om)]{\varphi}
+C_\ep \norm[L^\theta(\Om)]{\varphi}^{1-b}
\end{align}
We easily check that 
$b\cdot\frac{r}{q_0}=\frac{\frac{N}{\theta}-\frac{N}{q}}{1-\frac Nr+\frac{N}{\theta}}$,
thus \eqref{eq:interpolation} is valid for $q\le r$ as well.
\end{proof}

\begin{remark}
 The exponent $a$ in \eqref{eq:interpolation} is exactly the one from the Gagliardo-Nirenberg inequality
 \[
 \norm[L^q(\Om)]{\varphi} \le 
  C\norm[L^r(\Om)]{\nabla \varphi}^{a} \norm[\Lp]{\varphi}^{1-a}
   +C \norm[L^{p}(\Om)]{\varphi} \mbox{ for all } \varphi\in W^{1,r}(\Om).
 \]
 However $1-b \neq 1-a$. In fact, following the proof we can find $a+1-b<1$.
\end{remark}

\begin{remark}
 Given a family of functions $\{f_j\}_{j\in\mathbb{N}}$ 
 such that $\{f_j^p\}_{j\in\mathbb{N}}$ is equi-integrable, there exists 
 $\delta:(0,1)\to (0,\infty)$ nondecreasing such that
$f_j\in\mathcal{F}_\delta$, where $\mathcal{F}_\delta$ is defined in \eqref{chap1:f_delta}.
Therefore, we can apply Lemma \ref{lem:interpolation1} to a family of functions 
enjoying
equi-integrability.
\end{remark}

\section{Preliminaries for the Keller-Segel model}
\label{sec:pre}
In this section, some basic knowledge on the Keller-Segel system is prepared. We first introduce the well-established 
 local existence theory 
for \eqref{eq:system}. 
The proof can be found in many previous work, 
e.g. \cite[Lemma 3.1]{bellomo2015toward}.
\begin{lem}\label{lem:locexist}
Assume that $\Om\subset\R^N$ ($N\ge 2$) is a bounded domain 
with smooth boundary
that the initial data $(u_0,v_0)$ are nonnegative and
satisfy $u_0\in C^0(\overline{\Om})$ and $v_0\in W^{1,\infty}(\Om)$. 
There exists $T_{\max}\in(0,\infty]$ 
with the property such that the problem possesses a unique nonnegative classical 
solution $(u,v)$ 
satisfying 
\begin{align}\nn
&u\in C^0(\overline{\Om}\times[0,T_{\max}))\cap 
C^{2,1}(\overline{\Om}\times(0,T_{\max})),\\\nn
&v\in C^0(\overline{\Om}\times[0,T_{\max}))\cap 
C^{2,1}(\overline{\Om}\times(0,T_{\max}))\cap
L^\infty_{loc}([0,T_{\max});W^{1,\infty}(\Om)). \end{align}
Moreover, if $T_{\max}<\infty$, then 
\[
\norm[\Lin]{u(\cdot,t)}+\norm[W^{1,\infty}(\Om)]{v(\cdot,t)}\to 
\infty, 
\text { as }t\to T_{\max}.
\]
\end{lem}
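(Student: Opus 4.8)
The statement is the by-now standard local well-posedness theory for \eqref{eq:system}, and I would prove it by a contraction-mapping argument in the variation-of-constants formulation, followed by a parabolic regularity bootstrap and a continuation argument (compare \cite[Lemma 3.1]{bellomo2015toward} and the references given there). Let $(e^{t\Delta})_{t\ge0}$ denote the Neumann heat semigroup on $\Om$. A pair $(u,v)$ solves \eqref{eq:system} on $\Om\times[0,T]$ in the mild sense precisely when
\[
u(\cdot,t)=e^{t\Delta}u_0-\int_0^t e^{(t-s)\Delta}\nabla\cdot\big(u(\cdot,s)\,\nabla v(\cdot,s)\big)\,ds,\qquad v(\cdot,t)=e^{t(\Delta-1)}v_0+\int_0^t e^{(t-s)(\Delta-1)}u(\cdot,s)\,ds .
\]
Fixing some $q>N$, I would look for a fixed point of the map $\Phi=(\Phi_1,\Phi_2)$ given by these two right-hand sides, acting on the closed ball $B_R$ of radius $R:=2\big(\norm[C^0(\overline\Om)]{u_0}+\norm[W^{1,\infty}(\Om)]{v_0}+1\big)$ inside the set of pairs $(u,v)$ with $u,v\in C^0([0,T];C^0(\overline\Om))$ and $\sup_{t\in[0,T]}\norm[\Lin]{\nabla v(\cdot,t)}<\infty$; this last quantity is carried as an a priori bound rather than as part of a metric, which sidesteps the well-known failure of strong continuity of these semigroups on $W^{1,\infty}$ and is exactly why $v$ enters the statement only in the $L^\infty_{\mathrm{loc}}$-in-time sense.

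\textbf{The contraction step.} Here I invoke the familiar smoothing estimates for the Neumann heat semigroup: with some $C=C(\Om,N,q)>0$,
\[
\norm[\Lin]{e^{t\Delta}w}\le C\norm[\Lin]{w},\qquad \norm[\Lin]{\nabla e^{t\Delta}w}\le C\norm[\Lin]{\nabla w},\qquad \norm[\Lin]{e^{t\Delta}\nabla\cdot f}\le C\,t^{-\frac12-\frac N{2q}}\norm[\Lq]{f}
\]
for $t\in(0,1]$, $w\in W^{1,\infty}(\Om)$, $f\in L^q(\Om;\R^N)$, together with $\norm[\Lin]{\nabla e^{t\Delta}w}\le C\,t^{-1/2}\norm[\Lin]{w}$ and the analogous bounds for $e^{t(\Delta-1)}$ carrying an extra factor $e^{-t}$. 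For $(u,v)\in B_R$ one has $u\nabla v\in\Lin\subset\Lq$ with $\norm[\Lq]{u\nabla v}\le cR^2$, and since $q>N$ gives $-\tfrac12-\tfrac N{2q}>-1$, all the time integrals occurring in $\Phi(u,v)$ and in the difference $\Phi(u,v)-\Phi(\widetilde u,\widetilde v)$ converge and are bounded by $C(R)\,T^{\theta}$ for some $\theta>0$. Choosing $T=T(R,\Om,N,q)>0$ small enough, $\Phi$ maps $B_R$ into itself and is a contraction there, and Banach's fixed-point theorem yields a unique mild solution $(u,v)\in B_R$; strong continuity of the semigroups on $C^0(\overline\Om)$ gives $u(\cdot,t)\to u_0$ and $v(\cdot,t)\to v_0$ uniformly as $t\downarrow0$. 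The essential point to retain is that $T$ depends on the data only through $\norm[C^0(\overline\Om)]{u_0}+\norm[W^{1,\infty}(\Om)]{v_0}$.

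\textbf{Regularity, positivity, and the maximal existence time.} A standard parabolic bootstrap turns this mild solution into a classical one: applying parabolic $L^p$-, De Giorgi--Nash--Moser, and Schauder estimates alternately to the now linear equations $v_t=\Delta v-v+u$ and $u_t=\nabla\cdot(\nabla u-u\nabla v)$ --- first bringing $\nabla v$ into a Hölder class, then $u$ into $C^{\beta,\beta/2}_{\mathrm{loc}}$, then $v$ into $C^{2,1}$, then $u$ into $C^{2,1}$, and iterating once more --- one obtains $u,v\in C^{2,1}(\overline\Om\times(0,T))$, while $u\in C^0(\overline\Om\times[0,T])$ and $v\in C^0(\overline\Om\times[0,T])\cap L^\infty((0,T);W^{1,\infty}(\Om))$ come from the previous step. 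Nonnegativity is inherited from $u_0,v_0\ge0$: running the Picard iteration through the linear problems $v_t=\Delta v-v+u^{(k)}$ and $u_t=\nabla\cdot(\nabla u-u\nabla v^{(k+1)})$ with flux condition $\partial_\nu u-u\,\partial_\nu v^{(k+1)}=0$, the maximum principle keeps every iterate nonnegative and the sign survives in the limit (equivalently, once $v$ is known to be smooth, the comparison principle for the first equation forces $u\ge0$ and then the maximum principle for the second forces $v\ge0$). Uniqueness within the class of classical solutions follows from a Gronwall estimate on the difference of two solutions, and patching overlapping local solutions --- consistency being guaranteed by uniqueness --- produces a maximal solution on $[0,T_{\max})$ with $T_{\max}\in(0,\infty]$.

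\textbf{The extensibility criterion, and the main obstacle.} Suppose $T_{\max}<\infty$ yet $\norm[\Lin]{u(\cdot,t)}+\norm[W^{1,\infty}(\Om)]{v(\cdot,t)}$ does not tend to $\infty$ as $t\to T_{\max}$; then there are $M>0$ and $t_n\uparrow T_{\max}$ with $\norm[\Lin]{u(\cdot,t_n)}+\norm[W^{1,\infty}(\Om)]{v(\cdot,t_n)}\le M$ for every $n$. By the contraction step, the construction started at $t=t_n$ from the data $\big(u(\cdot,t_n),v(\cdot,t_n)\big)$ furnishes a solution on $[t_n,t_n+\tau]$ with $\tau=\tau(M,\Om,N,q)>0$ \emph{independent of $n$}; choosing $n$ with $t_n+\tau>T_{\max}$ and gluing by uniqueness produces a solution on an interval strictly containing $[0,T_{\max})$, contradicting maximality. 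Hence $T_{\max}<\infty$ forces $\norm[\Lin]{u(\cdot,t)}+\norm[W^{1,\infty}(\Om)]{v(\cdot,t)}\to\infty$ as $t\to T_{\max}$. The only genuinely delicate ingredients --- everything else being routine --- are the meaning of and the estimate for $e^{t\Delta}\nabla\cdot(\cdot)$ on the Neumann semigroup with the sharp singularity $t^{-\frac12-\frac N{2q}}$, which both forces the choice $q>N$ and explains why $W^{1,\infty}$ (not merely $L^\infty$) is the natural regularity of $v$, and the bookkeeping in the regularity bootstrap that upgrades the mild solution to a classical one while keeping continuity down to $t=0$.
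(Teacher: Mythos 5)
The paper itself offers no proof of Lemma \ref{lem:locexist}; it simply refers to \cite[Lemma 3.1]{bellomo2015toward}, and your overall scheme (Duhamel formulation, Banach fixed point via Neumann heat semigroup smoothing with $q>N$, parabolic bootstrap to classical regularity, maximum principle for nonnegativity, Gronwall for uniqueness, and a continuation argument whose local time depends only on $\|u_0\|_{C^0(\overline\Om)}+\|v_0\|_{W^{1,\infty}(\Om)}$) is exactly the standard route used there. There is, however, one step that does not close as you have written it: the contraction estimate. You treat both $u$ and $v$ as unknowns of $\Phi$ on a ball whose elements satisfy $\sup_t\|\nabla v(\cdot,t)\|_{L^\infty(\Om)}\le R$, but you explicitly exclude this quantity from the metric. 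Then in $\Phi_1(u,v)-\Phi_1(\widetilde u,\widetilde v)$ the integrand contains $u\nabla v-\widetilde u\nabla\widetilde v=(u-\widetilde u)\nabla v+\widetilde u\,\nabla(v-\widetilde v)$, and the second term cannot be bounded by the $C^0$-distance of the pairs: for arbitrary elements of the ball one only knows $\|\nabla(v-\widetilde v)\|_{L^\infty(\Om)}\le 2R$, not a bound in terms of $\|v-\widetilde v\|_{L^\infty(\Om)}$, and no rearrangement of derivatives helps because $\nabla\widetilde u$ is not controlled either. So, as stated, $\Phi$ is a self-map of $B_R$ but not a contraction for your metric, and the fixed point (and the uniqueness you later rely on for gluing) is not obtained.

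The repair is routine and should be made explicit: either (i) eliminate $v$ as an unknown by defining $v=v[u]$ through the second Duhamel formula and contract in $u$ alone in $C^0(\overline\Om\times[0,T])$, using
\[
\|\nabla v[u](\cdot,t)-\nabla v[\widetilde u](\cdot,t)\|_{L^\infty(\Om)}
\le C\int_0^t \bigl(1+(t-s)^{-\frac12}\bigr)e^{-(t-s)}\|u(\cdot,s)-\widetilde u(\cdot,s)\|_{L^\infty(\Om)}\,ds
\le C\bigl(T^{\frac12}+T\bigr)\sup_{s\in(0,T)}\|u-\widetilde u\|_{L^\infty(\Om)},
\]
which is precisely where the $t^{-1/2}$ gradient smoothing enters; or (ii) keep the pair but include $\sup_t\|\nabla(v-\widetilde v)(\cdot,t)\|_{L^\infty(\Om)}$ in the metric --- the failure of strong continuity of $e^{t\Delta}$ on $W^{1,\infty}(\Om)$ is irrelevant for this, since completeness of the ball under such a metric does not require continuity of $t\mapsto\nabla v(\cdot,t)$, only closedness of the Lipschitz ball under uniform convergence, which holds. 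With either modification the remaining parts of your argument (self-map estimate via $q>N$, bootstrap, nonnegativity, uniqueness, and the restart-and-glue continuation argument yielding the stated extensibility criterion) go through as you describe and reproduce the cited standard proof.
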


The following properties can be easily checked. 
\begin{lem}\label{lem:l1}
We have
\begin{align}
&\io u(\cdot,t)=\io u_0, \\
&\io v(\cdot,t)\le \max \left\{\io v_0,\io u_0\right\}, \mbox{ for all }t\in(0,T_{\max}).
\end{align}
\end{lem}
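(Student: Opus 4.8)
The plan is to integrate each of the two equations in \eqref{eq:system} over $\Om$ and to exploit the homogeneous Neumann conditions $\delny u=\delny v=0$ together with the divergence theorem. Since $(u,v)$ is a classical solution with the regularity recorded in Lemma~\ref{lem:locexist}, differentiation under the integral sign and the boundary integrations are all legitimate on $(0,T_{\max})$, and continuity up to $t=0$ lets one read off the initial values.

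For the first identity I would integrate $u_t=\Lap u-\na\cd(u\na v)$ over $\Om$, obtaining $\frac{d}{dt}\io u=\int_{\dOm}\big(\delny u-u\,\delny v\big)=0$ by the no-flux boundary conditions. Hence $\io u(\cdot,t)$ is constant in time and equals $\io u_0$ for all $t\in(0,T_{\max})$.

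For the second estimate I would integrate $v_t=\Lap v-v+u$ over $\Om$, use $\delny v=0$ and the mass conservation just established, and arrive at the scalar relation $\frac{d}{dt}\io v=-\io v+\io u_0$. Setting $y(t):=\io v(\cdot,t)$ and $m:=\io u_0$, this is $y'=-y+m$, so either by solving explicitly ($y(t)=m+(y(0)-m)e^{-t}$ with $e^{-t}\in(0,1)$) or by a straightforward ODE comparison one concludes $y(t)\le\max\{y(0),m\}=\max\{\io v_0,\io u_0\}$ on $(0,T_{\max})$, as claimed.

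There is no genuine obstacle in this lemma; the only point deserving a remark is the justification of the boundary integrations, which is immediate from the stated $C^{2,1}(\overline\Om\times(0,T_{\max}))$ regularity of $(u,v)$ combined with $C^0(\overline\Om\times[0,T_{\max}))$ for the behaviour at $t=0$.
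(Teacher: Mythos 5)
Your argument is correct and is exactly the standard computation the paper has in mind (it states the lemma without proof, remarking only that the properties are "easily checked"): integrating the first equation gives mass conservation via the no-flux conditions, and integrating the second gives $y'=-y+\io u_0$, whose explicit solution yields the stated bound. No gaps; the regularity remark about justifying the boundary integrations and the value at $t=0$ is appropriate.
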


In order to deal with a kind of spatial derivative estimate involving 
a time potential function, we introduce
the following version of maximal Sobolev regularity, which 
has been used in \cite[Lemma 2.5]{cao2016boundedness} and
\cite{yang2015boundedness}.
\begin{lem}\label{chap1:lem:maximal}
Let $r,q\in(1,\infty)$, and $T\in(0,\infty]$, $f\in L^r((0,T);L^q(\Om))$.
Let $v$ be the unique strong solution to the following evolution equation
\begin{eqnarray}\label{} 
\left\{
\begin{array}{llll}\displaystyle  
&v_t=\Delta v-v+f, &(x,t)\in\Om\times(0,T)\\[4pt]
\displaystyle
&\partial_\nu v=0, &(x,t)\in\pa\Om\times(0,T)\\[4pt]
\displaystyle 
&v(x,0)= v_0(x), & x\in \Om.\\[4pt]
\end{array}\right. \end{eqnarray}

There exists $C>0$, such that if $t_0\in[0,T)$, $v(\cdot,t_0)$ 
satisfies $v(\cdot,t_0)\in W^{2,r}(\Omega)$ with $\partial_\nu
v(\cdot,t_0)=0$, we have 
\begin{align}\label{maximal}
\int_{t_0}^T e^{\frac{rt}{2}}\norm[L^q(\Om)]{\Delta v(\cdot,t)}^r ds
\le C\int_{t_0}^T e^{\frac{rt}{2}}\norm[L^q(\Om)]{f(\cdot,t)}^r ds+
C e^{ \frac{rt_0}{2}}\norm[W^{2,q}(\Om)]{v(\cdot,t_0)}^r,
\end{align}
where $C$ depends on $q,r,\Om$.
\end{lem}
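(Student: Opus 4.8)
The goal is to prove the maximal Sobolev regularity estimate for the heat equation with the exponential weight $e^{rt/2}$ in Lemma~\ref{chap1:lem:maximal}. The plan is to reduce the weighted estimate to the classical (unweighted) maximal $L^r$–$L^q$ regularity for the Neumann heat semigroup, which is by now standard (see Hieber–Pr\"uss and the references in the cited works).

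First I would substitute $w(x,t):=e^{t/2}v(x,t)$, so that the equation $v_t=\Delta v-v+f$ becomes $w_t=\Delta w-\tfrac12 w+e^{t/2}f$. The point of the shift by $1/2$ rather than by $1$ is that the zeroth-order coefficient $-\tfrac12$ is still nonpositive, so the associated operator $-\Delta+\tfrac12$ with Neumann boundary conditions generates a bounded analytic semigroup on $L^q(\Om)$ and has the maximal regularity property. One then applies the known maximal regularity estimate on the finite or infinite interval $(t_0,T)$: with $g:=e^{t/2}f$ and initial datum $w(\cdot,t_0)=e^{t_0/2}v(\cdot,t_0)$,
\[
\int_{t_0}^T \norm[L^q(\Om)]{\Delta w(\cdot,t)}^r\,dt \le C\int_{t_0}^T \norm[L^q(\Om)]{g(\cdot,t)}^r\,dt + C\,\norm[W^{2,q}(\Om)]{w(\cdot,t_0)}^r,
\]
where $C=C(q,r,\Om)$ is independent of $T$ and $t_0$ — the $T$-independence being exactly what the exponential weight buys us and is the reason one shifts by a strictly positive constant. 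Translating back via $\Delta w=e^{t/2}\Delta v$ and $w(\cdot,t_0)=e^{t_0/2}v(\cdot,t_0)$ recovers \eqref{maximal} after noting $(e^{t/2})^r=e^{rt/2}$ and $(e^{t_0/2})^r=e^{rt_0/2}$.

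There is one technical point to address carefully: the classical maximal regularity statement is usually phrased with zero initial data, or with an initial value in a real interpolation space $(L^q,W^{2,q}_{\mathcal N})_{1-1/r,r}$ (a Besov space). Since the hypothesis here is the stronger $v(\cdot,t_0)\in W^{2,q}(\Om)$ with $\partial_\nu v(\cdot,t_0)=0$, one handles the initial data by splitting $w=w_1+w_2$, where $w_1$ solves the homogeneous problem $\partial_t w_1=\Delta w_1-\tfrac12 w_1$ with $w_1(\cdot,t_0)=w(\cdot,t_0)$, and $w_2$ solves the inhomogeneous problem with zero initial data and right-hand side $g$. For $w_2$ the zero–initial–data maximal regularity gives the $\int g^r$ term directly; for $w_1$, analyticity of the semigroup plus $W^{2,q}$-regularity of the datum gives $\int_{t_0}^\infty \norm[L^q]{\Delta w_1}^r\,dt \le C\norm[W^{2,q}]{w(\cdot,t_0)}^r$ (using the exponential decay coming from the shift by $\tfrac12$, which also ensures convergence when $T=\infty$). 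This is the step I expect to require the most care, though it is entirely routine semigroup theory; alternatively one may simply cite the precise form of the estimate as used in \cite[Lemma 2.5]{cao2016boundedness} and \cite{yang2015boundedness}, which is presumably the intended route. Finally, a brief remark that when $t_0=0$ the term $e^{rt_0/2}=1$ and the statement is the one actually applied later, and that the constant $C$ genuinely does not depend on $T$ nor on $t_0$, completes the proof.
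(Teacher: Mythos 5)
Your proposal is correct, and it shares the paper's central device: multiply by the weight $e^{t/2}$ so that the equation becomes $w_t=\Delta w-\tfrac12 w+e^{t/2}f$, and then invoke maximal $L^r$--$L^q$ regularity for the shifted Neumann operator with a constant independent of the length of the time interval (the paper cites Giga--Sohr \cite{giga1991abstract} for exactly this). Where you diverge is the treatment of the nonzero initial value $v(\cdot,t_0)\in W^{2,q}(\Om)$. The paper does not split the solution; instead it sets $w(x,s):=e^{\frac12 s}v(x,s+t_0)-\chi(s)v(x,t_0)$ with a cutoff $\chi$ supported in $[0,d]$, $d=\min\{\tfrac{T-t_0}{4},1\}$, so that $w$ has \emph{zero} initial data and the datum reappears only as an additional forcing term $g=\chi\,\Delta v(\cdot,t_0)-\chi' v(\cdot,t_0)-\tfrac12\chi\, v(\cdot,t_0)$, whose $L^r((0,T-t_0);L^q(\Om))$ norm is estimated directly by $\|v(\cdot,t_0)\|_{W^{2,q}(\Om)}$ using $|\chi'|\le 2/d$ and the smallness of the support; only the zero-initial-data maximal regularity is then needed, and one converts back via $e^{\frac12 s}\Delta v=\Delta w+\chi\Delta v(\cdot,t_0)$. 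Your route instead keeps $w=e^{t/2}v$ and decomposes $w=w_1+w_2$, handling the homogeneous part $w_1$ by semigroup decay: this is valid, but note it relies on the commutation $\Delta e^{(t-t_0)(\Delta-\frac12)}w_0=e^{(t-t_0)(\Delta-\frac12)}\Delta w_0$, which is legitimate precisely because $\partial_\nu v(\cdot,t_0)=0$ places the datum in the domain of the Neumann Laplacian, and for $w_2$ you should also say a word on passing from the bound on $(\Delta-\tfrac12)w_2$ (or $\partial_t w_2$) to a bound on $\Delta w_2$ alone, e.g.\ via the elementary convolution estimate $\|w_2(t)\|_{L^q(\Om)}\le\int_{t_0}^t e^{-(t-s)/2}\|g(s)\|_{L^q(\Om)}\,ds$ and Young's inequality. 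In short: same key lemma and same exponential shift, but the paper's cutoff-subtraction trick funnels the initial datum into the right-hand side and avoids any semigroup smoothing/decay estimates, while your splitting is the more standard semigroup argument at the cost of these extra (routine) verifications.
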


\begin{proof}
For given $t_0\in(0,T)$, we know that $\pa_\nu v(\cdot,t_0)=0$ on $\pa\Om$. 
Let $d:=\min\{\frac {T-t_0}{4},1\} $ and 
let $\chi\in C_0^\infty([0,\infty))$ be a cut-off function 
 satisfying 
 \begin{equation}
 \left\{
 \begin{array}{lll}
 & \chi(s)=1, & s=0,\\
 & \chi(s)\le 1, & 0 <s<d,\\
 & \chi(s)=0, & s\ge d.
 \end{array}
 \right.
 \end{equation}
Moreover,  $|\chi'(s)|\le \frac 2d$ for all $s\in[0,\infty)$. 
Let $w(x,s):=e^{\frac12 s}v(x, s+t_0)-\chi(s)v(x,t_0)$ 
for $(x,s)\in \Om\times[0,{T-t_0})$. We see that $w$ solves
the following equation
\begin{equation}\left\{
 \begin{array}{lll}\displaystyle
& w_s(x,s)=(\Delta-\frac 12) w(x,s)+e^{\frac12 s} f(x, s+t_0)+g(x,s), 
&(x,s)\in\Om\times(0,{T-t_0}),\\[4pt]
   \displaystyle
   &\nabla w\cdot\nu=0,&(x,s)\in\pa\Om\times[0,{T-t_0}),\\[4pt]
   \displaystyle
   & w(x,0)=0, &x\in\Om,
 \end{array}
\right.
\end{equation}
where $g(x,s):=\chi(s)\Delta v(x,t_0)-\chi'(s)v(x,t_0)
-\frac12 \chi(s)v(x,t_0)$ in $\Om\times[0,{T-t_0})$.

An application of the maximal Sobolev regularity result 
from  \cite{giga1991abstract}
implies the existence of $C_{q,r}>0$ such that
\begin{align*}
&\int_0^{T-t_0} \norm[\Lq]{\Delta w(\cdot,s)}^r ds \\
\le& C_{q,r}\int_0^{T-t_0} 
\norm[\Lq]{ e^{\frac12 s} f(x, s+t_0)}^r ds\\
&~~~~~~~~~~
+C_{q,r} \int_0^{T-t_0}
\norm[\Lq]{\chi(s)\Delta v(x,t_0)-\chi'(s)v(x,t_0)-\frac12 \chi(s)v(x,t_0)}^r ds\\
\le & 
C_{q,r}\int_0^{T-t_0} \norm[\Lq]{ e^{\frac12 s} f(x, s+t_0)}^r ds
+3^{r-1}C_{q,r} d (\frac 2d+\frac{3}{2}) \norm[W^{2,q}(\Om)]{v(\cdot,t_0)}^r
\\
\le& C_{q,r}\int_0^{T-t_0} \norm[\Lq]{ e^{\frac12 s} f(x, s+t_0)}^r ds
+4^{r}C_{q,r} \norm[W^{2,q}(\Om)]{v(\cdot,t_0)}^r.
\end{align*}

Since $e^{\frac12 s} \Delta v(x, s+t_0)=\Delta w(x,s)+\chi(s)\Delta v(x,t_0)$, we have
\begin{align*}
&\int_0^{T-t_0}e^{\frac{rs}{2}}\norm[\Lq]{\Delta v(\cdot, s+t_0)}^r ds\\
\le& 2^{r-1} \int_0^{T-t_0} \norm[\Lq]{\Delta w(\cdot,s)}^r ds+
2^{r-1} \int_0^{T-t_0} \norm[\Lq]{\chi(s)\Delta v(\cdot,t_0)}^r\\
\le&2^{r-1}C_{q,r}\int_0^{T-t_0} \norm[\Lq]{ e^{\frac12 s} f(x, s+t_0)}^r ds
+2^{r-1}(4^rC_{q,r}+1) \norm[W^{2,q}(\Om)]{v(\cdot,t_0)}^r.
\end{align*}
Upon changing variables, 
we obtain that
\begin{align}\nn
&\int_{t_0}^T e^{\frac{r}{2}(t-t_0)}\norm[\Lq]{\Delta v(\cdot,t)}^r dt \\\label{pre}
\le& 2^{r-1}C_{q,r} \int_{t_0}^T  e^{\frac{r}{2}(t-t_0)} \norm[\Lq]
{f(\cdot,t)}^r dt
+(8^rC_{q,r}+2^{r-1}) \norm[W^{2,q}(\Om)]{v(\cdot,t_0)}^r,
\end{align}
where 
\eqref{maximal} follows by multiplying \eqref{pre} by 
$ e^{\frac{r}{2}t_0}$ and choosing $C:=8^{r}C_{q,r}+2^{r-1}$.
\end{proof}
%
\section{Proof of Theorem \ref{thm:bdd}}\label{sec:proof}
Having in hand Proposition \ref{lem:lp}, 
we see that it is sufficient to show that
\eqref{eq:lem1:lp} holds for some $p>\frac N2$.
Before going into details, let us first prepare the following embedding lemma.
\begin{lem}\label{lem:embedding}
Let $\Om\subset \R^N$ be a bounded domain with smooth boundary, and 
let $\alpha\in(1,N)$.
For all $s\in(0,\infty]$, there is $C>0$ such that
\begin{align}\label{lem:emb0}
 \norm[L^\frac{N\alpha}{N-\alpha}(\Om)]{\nabla\varphi}
 \le C\norm[L^\alpha(\Om)]{\Delta \varphi}
 +C\norm[L^s(\Om)]{\varphi}, \mbox{ for all } \varphi \in W^{2,\alpha}(\Om) \mbox{ with }
 \pa_\nu \varphi=0 \mbox{ on } \pa\Om.
\end{align}
\end{lem}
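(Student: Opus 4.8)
The plan is to combine elliptic $L^\alpha$-regularity for the Neumann realization of $-\Delta+1$ with the Sobolev embedding $W^{2,\alpha}(\Omega)\hookrightarrow W^{1,\frac{N\alpha}{N-\alpha}}(\Omega)$, the latter being available precisely because $1<\alpha<N$. Concretely, first I would set $f:=-\Delta\varphi+\varphi$, which lies in $L^\alpha(\Omega)$, and observe that $\varphi$ is then the unique strong solution of $-\Delta\varphi+\varphi=f$ in $\Omega$ subject to $\partial_\nu\varphi=0$ on $\partial\Omega$. Standard Agmon--Douglis--Nirenberg elliptic estimates for this invertible Neumann problem on a smooth bounded domain provide a constant $C_1=C_1(\alpha,\Omega)$ with
\[
\|\varphi\|_{W^{2,\alpha}(\Omega)}\le C_1\|f\|_{L^\alpha(\Omega)}\le C_1\|\Delta\varphi\|_{L^\alpha(\Omega)}+C_1\|\varphi\|_{L^\alpha(\Omega)},
\]
and the embedding yields $\|\nabla\varphi\|_{L^{\frac{N\alpha}{N-\alpha}}(\Omega)}\le C_2\|\varphi\|_{W^{2,\alpha}(\Omega)}$. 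Chaining these two estimates already proves \eqref{lem:emb0} with $\|\varphi\|_{L^s}$ replaced by $\|\varphi\|_{L^\alpha}$.

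It then remains to convert the $L^\alpha$-norm of $\varphi$ on the right-hand side into an $L^s$-norm for arbitrary $s\in(0,\infty]$. When $s\ge\alpha$ this is immediate: since $\Omega$ is bounded, Hölder's inequality gives $\|\varphi\|_{L^\alpha(\Omega)}\le|\Omega|^{\frac1\alpha-\frac1s}\|\varphi\|_{L^s(\Omega)}$ (with the obvious modification $|\Omega|^{1/\alpha}$ when $s=\infty$), and one is done. The only genuinely non-trivial case is $s<\alpha$, where no such direct bound exists; this is the step I expect to be the main obstacle.

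For $s<\alpha$ I would use an Ehrling-type absorption argument. Since $W^{2,\alpha}(\Omega)\hookrightarrow\hookrightarrow L^\alpha(\Omega)$ compactly by Rellich--Kondrachov and $L^\alpha(\Omega)\hookrightarrow L^s(\Omega)$ continuously (again because $\Omega$ is bounded: $\int_\Omega|\varphi|^s\le|\Omega|^{\frac{\alpha-s}{\alpha}}\big(\int_\Omega|\varphi|^\alpha\big)^{s/\alpha}$ by Hölder with the exponent pair $\tfrac\alpha s,\tfrac\alpha{\alpha-s}$, which is legitimate for every $0<s<\alpha$), Ehrling's lemma furnishes, for each $\epsilon>0$, a constant $C_\epsilon>0$ with
\[
\|\varphi\|_{L^\alpha(\Omega)}\le\epsilon\|\varphi\|_{W^{2,\alpha}(\Omega)}+C_\epsilon\|\varphi\|_{L^s(\Omega)}.
\]
If one prefers not to quote Ehrling's lemma, the same inequality follows from a one-line contradiction argument: normalize $\|\varphi\|_{L^\alpha}=1$, extract an $L^\alpha$-convergent subsequence using Rellich, and note that its limit must vanish in $L^s$, hence in $L^\alpha$, which is absurd. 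Substituting this into the elliptic estimate above and choosing $\epsilon$ so small that $C_1\epsilon\le\frac12$, the term $\|\varphi\|_{W^{2,\alpha}(\Omega)}$ is absorbed into the left-hand side, giving
\[
\|\varphi\|_{W^{2,\alpha}(\Omega)}\le 2C_1\|\Delta\varphi\|_{L^\alpha(\Omega)}+2C_1C_\epsilon\|\varphi\|_{L^s(\Omega)}.
\]
Feeding this back through the Sobolev embedding $\|\nabla\varphi\|_{L^{\frac{N\alpha}{N-\alpha}}(\Omega)}\le C_2\|\varphi\|_{W^{2,\alpha}(\Omega)}$ yields \eqref{lem:emb0} with, say, $C:=2C_1C_2\max\{1,C_\epsilon\}$, completing the proof in all cases. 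Note that the hypothesis $\alpha\in(1,N)$ is used twice and essentially: $\alpha>1$ so that the elliptic $L^\alpha$-theory (and the Rellich compactness in $W^{2,\alpha}$) applies, and $\alpha<N$ so that $\frac{N\alpha}{N-\alpha}$ is the correct finite Sobolev exponent for $\nabla\varphi\in W^{1,\alpha}(\Omega)$.
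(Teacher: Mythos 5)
Your proposal is correct, and its skeleton coincides with the paper's: the same elliptic estimate $\norm[W^{2,\alpha}(\Om)]{\varphi}\le C(\norm[L^\alpha(\Om)]{\Delta\varphi}+\norm[L^\alpha(\Om)]{\varphi})$ for Neumann functions, the same Sobolev embedding $W^{2,\alpha}(\Om)\hookrightarrow W^{1,\frac{N\alpha}{N-\alpha}}(\Om)$, and the same H\"older reduction when $s\ge\alpha$. The only genuine divergence is the case $s<\alpha$: the paper replaces $\norm[L^\alpha(\Om)]{\varphi}$ by interpolating it, via the Gagliardo--Nirenberg inequality with exponent $b=\frac{\frac Ns-\frac N\alpha}{2+\frac Ns-\frac N\alpha}$ and Young's inequality, between $\norm[L^{\frac{N\alpha}{N-\alpha}}(\Om)]{\nabla\varphi}$ and $\norm[L^s(\Om)]{\varphi}$, and then absorbs the gradient term; you instead invoke an Ehrling/compactness argument ($W^{2,\alpha}\hookrightarrow\hookrightarrow L^\alpha$ plus the H\"older bound of the $L^s$-quasinorm by the $L^\alpha$-norm) and absorb the full $W^{2,\alpha}$-norm into the elliptic estimate. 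Both absorptions are legitimate; the paper's route is quantitative (the constants come from explicit interpolation, so their dependence on the data can in principle be traced), whereas your compactness route is shorter and sidesteps any discussion of Gagliardo--Nirenberg with the sub-unit exponent $s<1$, at the price of a non-constructive $C_\ep$. One small caution: for $s\in(0,1)$ the space $L^s(\Om)$ is not normed, so the textbook Ehrling lemma does not apply verbatim; however, the contradiction argument you sketch (normalize in $L^\alpha$, use Rellich, and note that $L^\alpha$-convergence on the bounded $\Om$ forces convergence of $\int_\Om|\cdot|^s$) goes through unchanged for all $s\in(0,\alpha)$, so your proof is complete as written.
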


\begin{proof}
 Using the fact that with some $c_1>0$, the estimates 
 $\norm[W^{2,\alpha}(\Om)]{\varphi}
 \le c_1(\norm[L^\alpha(\Om)]{\varphi}
 +\norm[L^\alpha(\Om)]{\Delta\varphi}$) holds for all $\varphi\in W^{2,\alpha}(\Om)$ 
 with $\pa_\nu \varphi|_{\pa\Om}=0$ \cite[Theorem 19.1]{friedman_book}, 
 we obtain
 a constant $c_2>0$ from the embedding  $W^{2,\alpha}(\Om) 
 \hookrightarrow 
 W^{1,\frac{N\alpha}{N-\alpha}}(\Om)$ that
 \begin{align}\label{lem:emb1}
 \norm[L^\frac{N\alpha}{N-\alpha}(\Om)]{\nabla\varphi}
 \le c_2(\norm[L^\alpha(\Om)]{\Delta\varphi}
 +\norm[L^\alpha(\Om)]{\varphi}).
 \end{align}

If $s<\alpha$, let 
$b=\frac{\frac Ns-\frac{N}{\alpha}}{2+\frac{N}{s}-\frac{N}{\alpha}}\in(0,1)$.
The Gargliardo-Nirenberg 
inequality together with Ponincar\'e inequality and Young's inequality implies
\begin{align}\nn
\norm[L^\alpha(\Om)]{\varphi}
&\le c_3\norm[L^\frac{N\alpha}{N-\alpha}(\Om)]{\nabla\varphi}^b
\norm[L^s(\Om)]{\varphi}^{1-b}
+c_3\norm[L^s(\Om)]{\varphi}\\\label{lem:emb3}
&\le \frac1{2c_2}  \norm[L^\frac{N\alpha}{N-\alpha}(\Om)]{\nabla\varphi}
+c_4 \norm[L^s(\Om)]{\varphi}
\end{align}
with some constant $c_3,c_4>0$ for all $\varphi\in W^{2,\alpha}(\Om)$ 
 with $\pa_\nu \varphi|_{\pa\Om}=0$.
If $s\ge \alpha$, we use H\"older's inequality
\begin{align}\label{lem:emb4}
\norm[L^\alpha(\Om)]{\varphi}\le |\Om|^{1-\frac{\alpha}{s}}\norm[L^s(\Om)]{\varphi}
\end{align}
instead of \eqref{lem:emb3}. Collecting (\ref{lem:emb1}-\ref{lem:emb4})
together yields \eqref{lem:emb0}.
\end{proof}

Now we are in a position to proceed the proof of our main ingredient. 

\begin{lem}\label{lem:n}
Assume that $\Om\subset\R^N$ ($N\ge 2$) is a bounded domain with smooth boundary,
Let $(u,v)$ be a classical solution of \eqref{eq:system} on
$\Om\times(0,T_{\max})$
with $T_{\max}\in(0,\infty)$. 
 If
 \begin{align}\label{lem2:con1}
\mathop{\sup}\limits_{t\in (0,T_{\max})} \norm[L^\frac{N}{2}(\Om)] {u(\cdot,t)}<\infty, \\ \mbox{ and }
\{u^\frac{N}{2}(\cdot,t)\}_{t\in(0,T_{\max})}
 \mbox{ is equi-integrable}.
 \end{align}
 Then there is $p\in(\frac N2,N)$ such that
  \begin{align}\label{lem2:lp}
  \mathop{\sup}\limits_{t\in (0,T_{\max})} \norm[L^p(\Om)] {u(\cdot,t)}<\infty.
 \end{align}
\end{lem}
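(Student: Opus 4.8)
The plan is to run a standard $L^p$ testing argument for the first equation, but to use the refined interpolation inequality of Lemma~\ref{lem:interpolation1} in place of the usual Gagliardo--Nirenberg inequality at the one critical spot where the borderline exponent $p=\frac N2$ would otherwise fail. Fix $p\in(\frac N2,N)$ to be specified later (close to $\frac N2$). Testing $u_t=\Delta u-\nabla\cdot(u\nabla v)$ against $u^{p-1}$ and integrating by parts gives, for all $t\in(0,T_{\max})$,
\begin{align}\nn
\frac1p\frac{d}{dt}\io u^p+\frac{4(p-1)}{p^2}\io|\nabla u^{p/2}|^2
=(p-1)\io u^{p-1}\nabla u\cdot\nabla v
=-\frac{p-1}{p}\io u^p\,\Delta v,
\end{align}
where in the last step I integrate by parts again and use $\Delta v=v_t-u+v$ only implicitly; more convenient is to keep $\io u^p\,\Delta v$ and estimate it directly. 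So the first step produces an ODI of the form $\frac{d}{dt}\io u^p+c\io|\nabla u^{p/2}|^2\le C\io u^p|\Delta v|+C\io u^p$.

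The second step is to absorb the chemotactic term. By Hölder's inequality, $\io u^p|\Delta v|\le \|u^{p/2}\|_{L^{2q'}}^2\,\|\Delta v\|_{L^q}$ for a suitable $q$; and by parabolic regularity for the $v$-equation (here is where Lemma~\ref{chap1:lem:maximal} enters, used in time-integrated form against $\|u\|_{L^q}$), $\|\Delta v\|_{L^q}$ is controlled, after integration in time, by $\|u\|_{L^q}$ in an $L^r$-in-time sense. The point is that since $\sup_t\|u\|_{L^{N/2}}<\infty$, one can interpolate $\|u\|_{L^q}$ for $q$ slightly above $\frac N2$ between $L^{N/2}$ and $L^p$, and similarly bound $\|u^{p/2}\|_{L^{2q'}}$ by $\|\nabla u^{p/2}\|_{L^2}^a\|u^{p/2}\|_{L^{2p/N}}^{1-a}$-type quantities. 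Here is the crux: with $p$ at the borderline $\frac N2$, the resulting exponent $a$ on $\|\nabla u^{p/2}\|_{L^2}$ hits exactly the threshold $2$, so a plain Gagliardo--Nirenberg estimate leaves a term $\sim\|\nabla u^{p/2}\|_{L^2}^2\,\|u\|_{L^{N/2}}^{(\cdot)}$ with a \emph{fixed} constant that cannot be made small, and the argument breaks down.

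This is exactly where the equi-integrability hypothesis \eqref{lem2:con1} is used. Since $\{u^{N/2}(\cdot,t)\}_{t\in(0,T_{\max})}$ is equi-integrable, there is a nondecreasing $\delta:(0,1)\to(0,\infty)$ with $u(\cdot,t)\in\mathcal F_\delta$ (in the sense of \eqref{chap1:f_delta}, with the exponent there equal to $\frac N2$) \emph{uniformly in $t$}, so I may apply Lemma~\ref{lem:interpolation1} with $r=2$, $\varphi=u^{p/2}$, and the relevant exponents chosen so that the power $p$ appearing in $\mathcal F_\delta$ equals $\frac N2$. The lemma then yields, for \emph{any} prescribed $\ep>0$, a bound in which the coefficient multiplying $\|\nabla u^{p/2}\|_{L^2}^{a}$ is $\ep$ (with $a$ the borderline exponent), at the cost of extra terms that are pure powers of $\|u^{p/2}\|_{L^{N/p}}=\|u\|_{L^{N/2}}^{p/2}$, hence \emph{bounded} by \eqref{lem2:con1}. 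Choosing $\ep$ small enough lets me absorb the dangerous term into the good dissipation term $\io|\nabla u^{p/2}|^2$ on the left.

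The final step is bookkeeping: after absorption, the differential inequality has the schematic form $\frac{d}{dt}\io u^p+\io|\nabla u^{p/2}|^2\le C$ (using once more Lemma~\ref{lem:embedding} to convert $\|\Delta v\|_{L^q}$-contributions into controlled quantities, and the time-integrability supplied by Lemma~\ref{chap1:lem:maximal} to handle the fact that the $v$-estimate is naturally in $L^r_t$ rather than $L^\infty_t$ — one integrates the ODI over a sliding unit time interval and uses a Grönwall/Gagliardo--Nirenberg-in-time argument as in \cite{tao2012boundedness,freitag2016boundedness}). This gives $\sup_{t\in(0,T_{\max})}\io u^p<\infty$, which is \eqref{lem2:lp}, and since $p>\frac N2$ the conclusion follows. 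The main obstacle is the second/third step: arranging the exponents $p,q,q_0,\theta$ in Lemma~\ref{lem:interpolation1} so that (i) the function class $\mathcal F_\delta$ matches the given equi-integrability of $u^{N/2}$, (ii) the smallness coefficient $\ep$ lands on precisely the term $\|\nabla u^{p/2}\|_{L^2}^{a}$ with $a\ge 2$ that needs absorbing, and (iii) all the leftover pure-power terms involve only $\|u\|_{L^{N/2}}$, which is bounded by hypothesis; coupling this cleanly with the maximal-regularity estimate for $\Delta v$ so that the time-integrated inequality closes is the technical heart of the proof.
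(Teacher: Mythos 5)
Your proposal follows essentially the same route as the paper's proof: $L^p$ testing, maximal Sobolev regularity (Lemma \ref{chap1:lem:maximal}) to trade $\|\Delta v\|_{L^p(\Om)}$ for $\|u\|_{L^p(\Om)}$ in time-integrated form, and---crucially---Lemma \ref{lem:interpolation1} applied to $u^{p/2}$ (with the class $\mathcal F_\delta$ exponent $N/p$, so equi-integrability of $u^{N/2}$ enters) to make the coefficient of the critical $\|\nabla u^{p/2}\|_{L^2(\Om)}^2$ term arbitrarily small and absorb it into the dissipation. The remaining differences are cosmetic (the paper keeps $\io u^p|\nabla v|^2$ and passes to $\|\Delta v\|_{L^p(\Om)}$ via Lemma \ref{lem:embedding}, then closes by variation of constants with the exponentially weighted maximal regularity rather than a sliding-interval Gr\"onwall), and note that the criticality is structural---the gradient exponent equals exactly $2$ after the power $2\lambda$ from maximal regularity for every $p\in(\tfrac N2,N)$---rather than an effect of $p$ sitting at $\tfrac N2$.
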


\begin{proof}
Let $p\in(\frac N2,N)$.
Let $\theta\in(1,\infty)$ 
satisfy $\frac{1}{\theta}=1+\frac 2N-\frac 2{p} \in(0,1)$, and $\theta'$ 
be such that $\frac{1}{\theta}+\frac{1}{\theta'}=1$.
We test the first equation in \eqref{eq:system} with $pu^{p-1}$ to obtain that
\begin{align}\nn
  \frac{d}{dt}\io u^p+p(p-1)\io u^{p-2}|\nabla u|^2
  &=p(p-1)\io u^{p-1}\nabla u\cdot\nabla v\\\nn
  &\le \frac{p(p-1)}{4} \io u^{p-2}|\nabla u|^2+p(p-1)\io u^p|\nabla v|^2
\end{align}
for all $t\in(0,T_{\max})$.
Applying H\"older's inequality, we get
\begin{align}\label{eq:lem2:1}
\frac{d}{dt} \io u^p+ \frac{3(p-1)}{p}\io |\nabla u^\frac{p}{2}|^2\le p(p-1)\io  u^p|\nabla v|^2
\le p(p-1)
\left(\io u^{p\theta}\right)^\frac{1}{\theta}
\left(\io |\nabla v|^{2\theta'}\right)^\frac{1}{\theta'}
\end{align}
for all $t\in(0,T_{\max})$.
Let $a:=\frac{p-\frac{N}{2\theta}}{1-\frac N2+p}\in(0,1)$, 
and abbreviate $\frac{1}{1-a}=:\lambda>1$. The Gagliardo-Nierenberg inequality implies the existence of $c_1>0$ such that
\begin{align}\nn
 p(p-1)\left(\io u^{p\theta}\right)^\frac {1}{\theta}
 &=(p-1)\norm[L^{2\theta}(\Om)]{u^\frac{p}{2}}^2
 \le c_1\norm[L^2(\Om)]{\nabla u^{\frac p2}}^{2a}
 \norm[L^{\frac Np } (\Om)]{u^\frac p2}^{2(1-a)}
 +c_1\norm[L^{\frac{N}{p}}(\Om)]{u^\frac{p}{2}}^2.
\end{align}
Using Young's inequality and the assumption \eqref{lem2:con1}, we find some
constant $c_2>0$ such that
 the right-hand side of \eqref{eq:lem2:1} is estimated as
\begin{align}\nn
p(p-1)\left(\io u^{p\theta}\right)^\frac{1}{\theta}
\left(\io |\nabla v|^{2\theta'}\right)^\frac{1}{\theta'}
 &\le 
 ( c_1\norm[L^2(\Om)]{\nabla u^{\frac p2}}^{2a}+c_1\norm[L^{\frac{N}{p}}(\Om)]{u^\frac{p}{2}}^2)
 \norm[L^{2\theta'} (\Om)]{\nabla v}^2\\\label{eq:lem2:2}
 &\le \frac{p-1}{p}  \norm[L^2(\Om)]{\nabla u^{\frac p2}}^2
 +c_2\norm[L^{2\theta'}(\Om)]{\nabla v}^{2\lambda}+c_2.
\end{align}
Due to the choices of $\theta$ and $\theta'$, we know that $p\in(1,N)$
and $2\theta'=\frac{Np}{N-p}$, hence an application of Lemma \ref{lem:embedding} yields $c_3>0$ such that
\begin{align}\label{eq:lem2:3}
\norm[L^{2\theta'}(\Om)]{\nabla v}^{2\lambda}
\le
c_3\norm[L^p(\Om)]{\Delta v}^{2\lambda}+c_3\norm[L^1(\Om)]{v}^{2\lambda}.
\end{align}

We also recall from the Gagliardo-Nirenberg inequality that there is $c_4>0$ fulfilling
\begin{align}\label{eq:lem2:7}
\frac{p-1}{p}\io|\nabla u^\frac{p}{2}|^2\ge \lambda\io u^p-c_4.
\end{align}
Thus we conclude from the previous estimates (\ref{eq:lem2:1}-\ref{eq:lem2:7}) 
and Lemma \ref{lem:l1} that
\begin{align}
 \frac{d}{dt}\io u^p+{\lambda}\io u^p+ \frac{(p-1)}{p}\io |\nabla u^\frac p2|^2
 \le c_3\norm[L^p(\Om)]{\Delta v}^{2\lambda}+c_2+c_4 +c_3\norm[L^1(\Om)]{v}^{2\lambda} \fat.
 \end{align}

Let $t_0\in(0,T_{\max})$. Applying the variation-of-constants formula to the above inequality, 
we find a constant $c_5>0$ such that
\begin{align}\nn
 \io u^p(\cdot,t)&\le e^{-{\lambda}(t-t_0)}\io u^p(\cdot,t_0)
 -\frac{(p-1)}{p}
 \int_{t_0}^t e^{-\lambda(t-s)}
 \io |\nabla u^\frac p2(\cdot,s)|^2 ds\\
 \label{eq:lem2:5}
 &\quad 
 +c_3\int_{t_0}^t e^{-\lambda(t-s)}\norm[L^p(\Om)]{\Delta v(\cdot,s)}^{2\lambda} ds
 +c_5
\end{align}
for all $t\in(t_0,T_{\max})$.
The maximal regularity from Lemma \ref{chap1:lem:maximal} provides a constant 
$c_6>0$ satisfying
\begin{align}
 c_3\int_{t_0}^t e^{-\lambda(t-s)}\norm[L^p(\Om)]{\Delta v}^{2\lambda} ds\le 
 c_6\int_{t_0}^t  e^{-\lambda(t-s)}\norm[L^p(\Om)]{u}^{2\lambda} ds+c_6.
\end{align}




Let $d=\frac{p-\frac N2}{1-\frac N2+p}$ and 
$b=\frac{\frac pN-\frac12}{\frac pN-\frac{p}{2p+2}}$. We can easily check that
$\frac{4\lambda}{p} d=2$. 
Since $\{u^\frac{N}{2}(\cdot,t)\}_{t\in(0,T_{\max})}$ is uniformly integrable, 
and therefore belongs to the set $\mathcal{F}_\delta$ defined in 
\eqref{chap1:f_delta} with some nondecreasing $\delta: (0,1)\to(0,\infty)$. 
Since (\ref{lem2:con1}), with  
\[\ep:=\frac{\frac{p-1}{p}}{\mathop{\sup}\limits_{t\in(0,T_{\max})}
\norm[L^\frac{N}{p}(\Om)]{u^\frac{p}{2}}^{\frac{4\lambda}{p}(1-b)}}>0,
\]
we can apply 
Lemma \ref{lem:interpolation1} (in the case $q=r=2$, and with $\theta=\frac{N}{p}<q$ 
by virtue of 
$p>\frac N2$)
to find $c_\ep>0$ such that
\begin{align}\nn
 c_6\norm[L^p(\Om)]{u}^{2\lambda}
 &=c_6\norm[L^2(\Om)]{u^\frac{p}{2}}^{\frac{4\lambda}{p}}\\
 &\le \ep c_6\norm[L^2(\Om)]{\nabla u^\frac{p}{2}}^{\frac{4\lambda}{p} d}
 \norm[L^\frac{N}{p}(\Om)]{u^\frac{p}{2}}^{\frac{4\lambda}{p}(1-b)}
+c_\ep
 \le \frac {(p-1)}{p} \norm[L^2(\Om)]{\nabla u^\frac{p}{2}}^2+c_\ep
\end{align}
for all $t\in(0,T_{\max})$,
which leads to
\begin{align}\label{eq:lem2:6}
c_3\int_{t_0}^t e^{-\lambda(t-s)}\norm[L^p(\Om)]{\Delta v(\cdot,s)}^{2\lambda} ds
\le 
\frac {(p-1)}{p}\int_{t_0}^t e^{-\lambda(t-s)} 
\io|\nabla u^\frac p2(\cdot,s)|^2 ds+c_\ep+c_6
\end{align} 
for all $t\in(t_0,T_{\max})$.
Adding this to \eqref{eq:lem2:5} 
shows that
\[
 \io u^p(\cdot,t)
 \le e^{-\lambda(t-t_0)}\io u^p(\cdot,t_0)
 +c_5+c_6+c_\ep\le \io u^p(\cdot,t_0)
 +c_5+c_6+c_\ep
\]
for all $t\in(t_0,T_{\max})$. 
Since $\mathop{\sup}\limits_{t\in(0,t_0]} \norm[\Lp]{u(\cdot,t)}<\infty$
 due to the local existence theory, this shows
 \eqref{lem2:lp}.
 \end{proof}

\begin{proof}[Proof of Theorem \ref{thm:bdd}]
Employing Lemma \ref{lem:n} and Proposition \ref{lem:lp} proves 
$\mathop{\sup}\limits_{t\in(0,T_{\max})} \norm[\Lin]{u(\cdot,t)}<\infty$, which 
combined with Lemma \ref{lem:locexist} implies that $T_{\max}=\infty$. Thus the solution is global
and bounded.
\end{proof}

\section{Blow up behavior}\label{sec:blowup}

From another aspect, the extension criterion in Theorem 
\ref{thm:bdd} also gives the
characterization of blow up solutions.

\begin{proof}[Proof of Theorem \ref{cor:blowup}]
Suppose on contrary that 
$\{u^\frac{N}{2}(\cdot,t)\}_{t\in(0,T_{\max})}$ 
is equi-integrable  with $T_{\max}\in(0,\infty]$. 
We can apply Theorem \ref{thm:bdd} to show that there is a constant $C>0$ such that
\[\norm[\Lin]{u(\cdot,t)}\le C,\] 
 for all $t\in(0,T_{\max})$, which is a contradiction.
\end{proof}

\renewcommand\thesection{\Alph{section}}
\setcounter{section}{0}
\section{Appendix}
We claim a basic property of extension functions which we have used in the proof of 
Lemma \ref{lem:interpolation1}. 
Namely, the extension function $\varphit\in W^{1,r}(\Om')$ 
is equi-integrable with respect to some power in $\Om'$ provided $\varphi$ has the same property 
in $\Om$. Since we can not find this precise result
in any reference,
we also give a brief proof here.
\begin{thm}\label{thm:extension}
Assume that $\Om\subset \R^N$ is a bounded domain with smooth boundary 
and that $r>1$, $1\le q<\frac{Nr}{(N-r)_+}$. Let $\Om'$ be a bounded smooth domain
with $\Om\subset\Om'$. Then there is $C>0$ and for any nondecreasing function 
$\delta:(0,1)\to
(0,\infty)$, we can find $\tilde\delta:(0,1)\to
(0,\infty)$ nondecreasing such that we can extend any function $\varphi\in W^{1,r}(\Om)$ to a
function $\varphit\in W^{1,r}_0(\R^N)$
in such a way that 
 \begin{align}\label{chap1:app1}
 &{\varphit}=\varphi \mbox{ a.e.  in }\Om,\quad \supp {\varphit} \subset \Om',\\
 \label{chap1:app2}
 & \norm[W^{1,r}(\Om')]{\nabla{\varphit}}^r
 \le C\norm [W^{1,r}(\Om)]{{\nabla\varphi}}^r,\\\label{chap1:app3}
 &\norm[L^q(\Om')]{{\varphit}}\le C\norm [L^q(\Om)]{{\varphi}}.\end{align}
Moreover, if  $\varphi\in\mathcal{F}_\delta$ with 
\begin{align}\nn
\mathcal F_{\delta}:=\bigg\{ \psi\in W^{1,r}(\Om)\ \bigg| \ 
\mbox{ For all } \ep'\in(0,1), &\mbox{ we have } \int_{E} \psi^p<\ep'  \mbox{ for all 
measurable sets }\\\label{phi_delta}&
E \subset \Om \mbox{ with } |E|<\delta(\ep')
 \bigg\},
\end{align}
then 
$\varphit\in \mathcal{F_{\widetilde\delta}}$ with
\begin{align}\nn
\mathcal F_{\widetilde\delta}:=\bigg\{ \psi\in W^{1,r}(\Om')\ \bigg| \ 
\mbox{ For all } \ep'\in(0,1), &\mbox{ we have } \int_{E} \psi^p<\ep'  
\mbox{ for all measurable sets }\\
\label{phi'_delta'}&
E \subset \Om' \mbox{ with } |E|<\widetilde\delta(\ep')
 \bigg\}.
\end{align}
\end{thm}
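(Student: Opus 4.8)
The plan is to let $\varphit$ be the \emph{classical} boundary‑flattening‑and‑reflection extension, whose only ingredients are finitely many local pullbacks by bi‑Lipschitz maps and multiplications by bounded cut‑off functions; such operations control the $L^q$- and $W^{1,r}$-norms in the familiar way and, what matters here, only rescale the equi‑integrability modulus by fixed constants. Since $\overline\Om\subset\Om'$ (enlarging $\Om'$ slightly if necessary, which is harmless because $\varphit$ will be supported near $\overline\Om$), the number $\rho_0:=\mathrm{dist}(\overline\Om,\R^N\setminus\Om')$ is positive. First cover $\pa\Om$ by open sets $U_1,\dots,U_K$, each of diameter less than $\rho_0$ and carrying a smooth diffeomorphism $\Phi_k$ that sends $U_k$ onto a box symmetric about $\{y_N=0\}$, with $U_k\cap\Om$ going to the upper half and $U_k\setminus\overline\Om$ to the lower half; add $U_0$ with $\overline{U_0}\subset\Om$ so that $\{U_k\}_{k=0}^{K}$ covers $\overline\Om$, and fix a subordinate partition of unity $\{\zeta_k\}_{k=0}^{K}$ with $0\le\zeta_k\le1$ and $\sum_k\zeta_k\equiv1$ near $\overline\Om$. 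Then extend $\zeta_0\varphi$ by zero, extend each $\zeta_k\varphi$ $(k\ge1)$ across $\pa\Om$ by even reflection in the $\Phi_k$-coordinates, sum the pieces, and multiply by a cut‑off $\chi\in C_0^\infty(\Om')$ with $\chi\equiv1$ on $\overline\Om$. This produces $\varphit\in W^{1,r}(\R^N)=W^{1,r}_0(\R^N)$ with $\varphit=\varphi$ a.e.\ in $\Om$ and $\supp\varphit\subset\Om'$, and \eqref{chap1:app2}, \eqref{chap1:app3} are the standard norm bounds for this operator \cite{friedman_book}: even reflection across a hyperplane maps $W^{1,r}$ to $W^{1,r}$ and is measure preserving, the Jacobians of $\Phi_k$ and $\Phi_k^{-1}$ are bounded above and below by constants depending only on $\Om$ and the chosen atlas, and $0\le\zeta_k,\chi\le1$.

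The heart of the matter is the equi‑integrability statement. For $k\ge1$ set $\rho_k:=\Phi_k^{-1}\circ\sigma_k\circ\Phi_k$, with $\sigma_k$ the coordinate reflection $y_N\mapsto-y_N$; then $\rho_k$ is a bi‑Lipschitz map of $U_k\setminus\overline\Om$ onto a subset of $U_k\cap\Om$, and on $U_k\setminus\overline\Om$ the $k$-th summand of $\varphit$ equals $(\zeta_k\varphi)\circ\rho_k$, which is bounded in absolute value by $|\varphi\circ\rho_k|$. Fix a constant $J\ge1$ bounding $|\det D\rho_k|$ and $|\det D\rho_k^{-1}|$ from above for every $k$, so that $|\rho_k(A)|\le J|A|$ and $\int_A|\varphi\circ\rho_k|^p\le J\int_{\rho_k(A)}|\varphi|^p$ for measurable $A\subset U_k\setminus\overline\Om$. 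Using $0\le\zeta_k,\chi\le1$ together with $|\sum_{k=0}^{K}a_k|^p\le C_p\sum_{k=0}^{K}|a_k|^p$, $C_p:=(K+1)^{(p-1)_+}$ (valid for every $p>0$), one obtains for any measurable $E\subset\Om'$
\[
\int_E|\varphit|^p\le C_p\Big(\int_{E\cap\Om}|\varphi|^p+J\sum_{k=1}^{K}\int_{\rho_k(E\cap(U_k\setminus\overline\Om))}|\varphi|^p\Big),
\]
where each of the $K+1$ integration domains lies in $\Om$ and has measure at most $J|E|$.

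It then remains to calibrate $\widetilde\delta$. Given $\ep'\in(0,1)$, put $\ep'':=\ep'/\big((K+1)\,C_p\,(1+J)\big)$ (enlarge the constant so that $\ep''<1$) and define $\widetilde\delta(\ep'):=\delta(\ep'')/J$, which is nondecreasing since $\delta$ is and $\ep'\mapsto\ep''$ is increasing. If $\varphi\in\mathcal F_\delta$ and $|E|<\widetilde\delta(\ep')$, then every integration domain in the displayed inequality has measure below $J\widetilde\delta(\ep')=\delta(\ep'')$, hence each of the $K+1$ integrals is $<\ep''$, so $\int_E|\varphit|^p<C_p(1+J)(K+1)\,\ep''=\ep'$; thus $\varphit\in\mathcal F_{\widetilde\delta}$. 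The same display with $E=\Om'$ reproves \eqref{chap1:app3} (with $q$ in place of $p$), and \eqref{chap1:app2} follows in the same spirit after differentiating the local pieces, each derivative producing at worst a bounded factor from $\na\zeta_k$, $\na\chi$, or $D\rho_k$.

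The only genuine obstacle is organisational: one has to make sure the boundary atlas $\{(U_k,\Phi_k)\}$ can be taken \emph{finite}, so that $K$ and the distortion constant $J$ are absolute numbers not depending on $\varphi$ (or on $\delta$); this is exactly what smoothness---indeed mere Lipschitz regularity---of $\pa\Om$ guarantees, and it is the reason the equi‑integrability modulus is merely rescaled rather than destroyed. Everything else (the $W^{1,r}$-membership of the even reflection, the operator bounds, the gluing through the partition of unity) is classical and can be quoted from standard references on Sobolev extension.
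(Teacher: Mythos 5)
Your construction and argument match the paper's: a finite boundary atlas with flattening diffeomorphisms, a reflection-type extension glued by a partition of unity, and a change-of-variables estimate showing $\int_E|\varphit|^p$ is controlled by integrals of $|\varphi|^p$ over subsets of $\Om$ whose measure is at most a fixed multiple of $|E|$, after which $\widetilde\delta$ is obtained by rescaling $\delta$ (and $\ep'$) by these fixed constants. The only immaterial difference is that you use plain even reflection where the paper follows Evans' first-order reflection $-3\varphi\circ Y_1+4\varphi\circ Y_2$; both are standard $W^{1,r}$ extension operators, so your proof is correct and essentially the same as the paper's.
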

\begin{proof}

First, \eqref{chap1:app1} and \eqref{chap1:app2} are precisely proven in
\cite[Theorem 5.4.1]{evans}.
Now we recall the construction of the extension function in the proof to show 
the remaining properties.
Since $\pa\Om$ is compact, we can find finitely many points 
$\{x_i\}_{1\le i\le K}\subset \pa\Om$ and open sets $\{W_i\}_{1\le i\le K}\subset \Om'$ 
with $x_i\in W_i$ and $W_0\subset \Om$ such that $\pa\Om\subset\mathop{\cup}\limits_{1\le i\le K} W_i$ and 
$\Om\subset W_0\cup(\mathop{\cup}\limits_{1\le i\le K} W_i)\subset \Om'$. There exist $C^1$ 
diffeomorphisms $\Phi_i: W_i\to \R^N$ $(1\le i\le K)$ which flatten out $\pa\Om$ near $x_i$; 
namely, if we let 
$B_i:=\Phi_i(W_i)$ be a ball, it satisfies 
$B_i^-=\Phi_i(W_i\cap\Om^c)=\{y=(y_1,...,y_N)|~y_N<0\}$, 
$B_i^+=\Phi_i(W_i\cap\Om)=\{y=(y_1,...,y_N)|~y_N>0\}$.
Now we define 
linear transformations 
\begin{align*}
&Y_1:(y_1,...,y_N)\in B_i^-\to (y_1,...,y_{N-1},-y_N)\in B_i^+,
\\
&Y_2:(y_1,...,y_N)\in B_i^-
\to (y_1,...,y_{N-1},-\frac12 y_N) \in B_i^+.
\end{align*} 
Let
$\varphi_i'(y)=\varphi(\Phi_i^{-1}(y))$ ($y\in B_i+$, $x=\Phi_i^{-1}(y)\in W_i\cap\Om$).
A first order reflection of $\varphi_i'(y)$ is given by 
\begin{equation}
\varphit_i'(y):=
\left\{
\begin{array}{lll}
&-3\varphi_i'(Y_1(y))+4\varphi_i'(Y_2(y)), & y\in B_i^-,
\\
& \varphi_i'(y), & y\in B_i^+.
\end{array}
\right.
\end{equation}
If we let $\{\zeta_i\}_{0\le i\le K}$ be a partition of unity subordinate to $\{W_i\}_{0\le i\le K}$,
the associated extension $\varphit: \Om'\to \R^N$ of $\varphi$ is defined by converting $\varphit_i'$ 
back to $W_i$
\begin{equation}
\varphit(x):=
\left\{
\begin{array}{lll}
& \varphi(x), & x\in \Om=\mathop{\cup}\limits_{0\le i\le K} W_i+,\\[6pt]
& \mathop{\sum}\limits_{i=0}^{i=K} \zeta_i(x) 
\left\{-3\varphi(\Phi_i^{-1}(Y_1(\Phi_i(x))))+4\varphi(\Phi_i^{-1}(Y_2(\Phi_i(x))))\right\},
&x\in \mathop{\cup}\limits_{1\le i\le K} W_i-,\\[6pt]
& 0, & x\in \Om' \backslash\mathop{\cup}\limits_{0\le i\le K} W_i,
\end{array}
\right.
\end{equation}
where $W_i^+:=\Phi_i^{-1}(B_i^+)$, $W_i^-:=\Phi_i^{-1}(B_i^-)$.
Since the mappings $\Phi_i$, $\Phi_i^{-1}$ $(1\le i\le K)$, $Y_j$ $(j=\{1,2\})$ are $C^1$, 
we can find a constant $c_1>0$
such that $|\Phi_i^{-1}(Y_i(\Phi_i(U)))|\le c_1 |U|$ for all $U\subset W_i^-$ $(1\le i\le K)$.
For any measurable subset $E'\subset \Om'$, let
$E_i:=E'\cap W_i^-$.
We note that $\Phi_i^{-1}(Y_2(\Phi_i(E_i)))\subset\Phi_i^{-1}(Y_1(\Phi_i(E_i)))\subset
\Phi_i^{-1}(B_i^+)\subset \Om$.
By changing variables, for each $1\le i\le K$, we have
\begin{align}\nn 
\int_{E_i} |\varphit(x)|^p dx
&=\int_{E_i}  |-3\varphi(\Phi_i^{-1}(Y_1(\Phi_i(x))))
+4\varphi(\Phi_i^{-1}(Y_2(\Phi_i(x))))|^p dx\\\nn
&=\int_{\Phi_i(E_i)} 
|-3\varphi(\Phi_i^{-1}(Y_1(y)))
+4\varphi(\Phi_i^{-1}(Y_2(y)))|^p |\det (D\Phi_i^{-1}(y))| dy\\\nn
&=\int_{\Phi_i(E_i)} |-3\varphi_i'(y_1,...,y_{N-1},-y_n)+4
\varphi_i'(y_1,...y_{n-1},-\frac 12 y_n)|^p |\det (D\Phi_i^{-1}(y))| dy\\\nn
&\le 2^{p-1}\int_{Y_1(\Phi_i(E_i))} 3^p |\varphi'_i(y)|^p |\det (D\Phi_i^{-1}(y))| dy
+2^{p-1}\int_{Y_2(\Phi_i(E_i))} 4^{p-1}\frac 12 |\varphi'_i(y)|^p |\det (D\Phi_i^{-1}(y))|dy\\\nn
&\le 6^{p}\int_{\Phi_i^{-1}(Y_1(\Phi_i(E_i)))}|\varphi(x)|^p dx
+8^{p}\int_{\Phi_i^{-1}(Y_2(\Phi_i(E_i)))} |\varphi(x)|^p  dx
1\end{align}
According to \eqref{phi_delta},
given $\ep'>0$, we have that $\delta(\ep')>0$ such that 
$\int_E \varphi^p<\frac{\ep'}{8^p(3K)}$ 
for all $E\subset \Om$ with $|E|\le \delta(\ep')$. 
We let $\widetilde\delta:=\frac{1}{c_1}\delta$ such that if $|E'|<\min\{\widetilde\delta,\delta\}$, 
then $|\Phi_i^{-1}(Y_1(\Phi_i(E_i)))|, |\Phi_i^{-1}(Y_2(\Phi_i(E_i)))|<\delta$ for all $1\le i\le K$, hence
\begin{align}\nn
\int_{E'} |\varphit(x)|^p dx&= \int_{E'\cap\Om} |\varphi(x)|^pdx+\int_{E'\cap\Om^c} |\varphit(x)|^p dx\\\nn
&\le \int_{E'\cap\Om} |\varphi(x)|^p dx+\mathop{\sum}\limits_{i=1}^{i=K}
\int_{E_i} |\varphi(x)|^p dx\\\nn
&\le \int_{E'\cap\Om} |\varphi(x)|^p dx+\mathop{\sum}\limits_{i=1}^{i=K}
\left(6^{p}\int_{\Phi_i^{-1}(Y_1(\Phi_i(E_i)))} |\varphi(x)|^p dx
+8^{p}\int_{\Phi_i^{-1}(Y_2(\Phi_i(E_i)))} |\varphi(x)|^pdx
\right)\\\nn
&\le \frac{\ep'}{8^p 3K}+K(\frac{6^p\ep'}{8^p 3K}
+\frac{8^{p}\ep'}{8^p 3K})< \ep'.
\end{align}
Therefore, $\varphit\in \mathcal F_{\widetilde\delta}$ is shown.
Using $\int_{\Om'} |\varphit|^q=\mathop{\sum}\limits_{0\le i\le K} \int_{E_i} |\varphit|^q$, \eqref{chap1:app3} can be proven in a similar way. 
\end{proof}

\section*{Acknowledgement}
The author would like to thank Johannes Lankeit, Christian Stinner and Michael Winkler for carefully reading and for 
many useful comments, which significantly improve the paper.

\end{document}